%

\documentclass[aop,MSNbibl,dvips]{arximspdf}
\usepackage{graphicx}
%

\doi{10.1214/12-AOP770} 
\volume{41}
\issue{4}
\pubyear{2013}
\firstpage{2682}
\lastpage{2708}

\makeatletter
\setattribute{abstract}{width}{280pt}

\newproclaim{remark}{Remark}
\newproclaim{remarks}{Remarks}

\newtheorem{theorem}{Theorem}[section]
\newtheorem{corollary}[theorem]{Corollary}
\newtheorem{prop}[theorem]{Proposition}
\newproclaim{definition}[theorem]{Definition}

\newcommand{\rrvert}{\vert}
\newcommand{\llvert}{\vert}

\newcommand{\cal}{\mathcal}

\newcommand{\Sp}{Z}
\newcommand{\Sm}{A}

\newcommand{\cip}{\stackrel{p}{\longrightarrow}}
\newcommand{\ed}{\stackrel{d}{=}}

\newcommand{\aaa}{\alpha}
\renewcommand{\th}{\theta}
\newcommand{\al}{\alpha}
\newcommand{\Om}{\Omega}

\newcommand{\reals}{\mathbb{R}}

\newcommand{\lar}{\Longrightarrow}

\newcommand{\eps}{\varepsilon}
\newcommand{\si}{\sigma}

\newcommand{\N}{\mathbb{N}}
\newcommand{\R}{\mathbb{R}}
\newcommand{\Z}{\mathbb{Z}}
\newcommand{\F}{\mathcal{F}}

\renewcommand{\P}{\mathbf{P}}
\newcommand{\E}{\mathbf{E}}
\newcommand{\EEE}{\mathcal{E}}
\renewcommand{\AA}{{\cal A}}
\newcommand{\BB}{{\cal B}}
\newcommand{\CC}{{\cal C}}
\newcommand{\DD}{{\cal D}}
\newcommand{\FF}{{\cal F}}
\renewcommand{\SS}{{\cal S}}

\makeatother

\begin{document}
\begin{frontmatter}

\title{Random walks at random times: Convergence to iterated
L\'{e}vy motion, fractional stable motions, and other self-similar
processes}
\runtitle{Random walks at random times}

\begin{aug}
\author[A]{\fnms{Paul} \snm{Jung}\corref{}\thanksref{t1}\ead[label=e1]{pauljung@gmail.com}}
\and
\author[B]{\fnms{Greg} \snm{Markowsky}\thanksref{t2}}
\runauthor{P. Jung and G. Markowsky}
\affiliation{University of Alabama at Birmingham and Monash University}
\address[A]{Department of Mathematics\\
University of Alabama at Birmingham\\
CH 452\\
1720 2nd Ave. South\\
Birmingham, Alabama 35294\\
USA} 
\address[B]{Department of Mathematical Sciences\\
Monash University\\
Clayton, Victoria 3800\\
Australia}
\end{aug}

\thankstext{t1}{Research started while at Sogang University and
supported by Sogang University research Grant 200910039.}

\thankstext{t2}{Supported by the Priority Research
Centers Program through the National Research Foundation of Korea
(NRF) funded by the Ministry of Education, Science and Technology
(Grant \#2009-0094070) and from Australian Research Council Grant
DP0988483.}

\received{\smonth{5} \syear{2011}}
\revised{\smonth{4} \syear{2012}}

%
\begin{abstract}
For a random walk defined for a doubly infinite sequence of times, we
let the time parameter itself be an integer-valued process, and call
the orginal process a random walk at random time. We find the scaling
limit which generalizes the so-called iterated Brownian motion.

Khoshnevisan and Lewis [\textit{Ann. Appl. Probab.} \textbf{9} (1999)
629--667] suggested ``the existence of a form of measure-theoretic
duality'' between iterated Brownian motion and a Brownian motion in
random scenery. We show that a random walk at random time can be
considered a random walk in ``alternating'' scenery, thus hinting at a
mechanism behind this duality.

Following Cohen and Samorodnitsky [\textit{Ann. Appl. Probab.}
\textbf{16} (2006) 1432--1461], we also consider alternating random
reward schema associated to random walks at random times. Whereas
random reward schema scale to local time fractional stable motions, we
show that the alternating random reward schema scale to indicator
fractional stable motions.

Finally, we show that one may recursively ``subordinate'' random time
processes to get new local time and indicator fractional stable motions
and new stable processes in random scenery or at random times. When
$\alpha=2$, the fractional stable motions given by the recursion are
fractional Brownian motions with dyadic $H\in(0,1)$. Also, we see that
``un-subordinating'' via a time-change allows one to, in some sense,
extract Brownian motion from fractional Brownian motions with $H<1/2$.
\end{abstract}

%
\begin{keyword}[class=AMS]
\kwd{60G22}
\kwd{60G52}
\kwd{60F05}
\end{keyword}
\begin{keyword}
\kwd{Fractional Brownian motion}
\kwd{random walk in random scenery}
\kwd{random reward schema}
\kwd{local time fractional stable motion}
\kwd{self-similar process}
\kwd{iterated process}
\end{keyword}

\end{frontmatter}

\section{Introduction}
Let $B^{(i)}(t), i=1,2,3$, be three independent Brownian motions, and
let a two-sided Brownian motion be defined by
%
\begin{equation}
\label{two-sided} \tilde{B}(t):= \cases{ B^{(1)}(t), &\quad if $t\ge0$,
\cr
B^{(2)}(-t), &\quad if $t<0$.}
\end{equation}
In~\cite{burdzy1992some}, Burdzy studied the process $ (\tilde
{B}(B^{(3)}(t)) )_{t\ge0}$ which he called an iterated Brownian
motion (IBM). It can be thought of as a two-sided Brownian motion which
is nonmonotonically ``subordinated'' to another
Brownian motion.
This process was also used by Deheuvels and Mason
\cite{deheuvels1992functional} to study the Bahadur--Kiefer process.
Also, a variant of IBM, where the pure imaginary process
$i B^{(2)}(-t)$ was substituted for $t<0$, was utilized by Funaki \cite
{funaki1979probabilistic} to study the PDE
%
\begin{equation}
\frac{\partial u}{\partial t}=\frac{1}{8}\,\frac{\partial^4
u}{\partial x^4}.
\end{equation}

Recently, more general \textit{processes at random times} called
$\alpha$-time Brownian motions and $\alpha$-time fractional Brownian
motions were introduced in~\cite{nane2006laws,nane2011local}. In
these works (along with several references therein), the connection
between processes at random times and various PDEs was studied,
along with the local time and path properties of the iterated
processes. In a different direction, the scaling and asymptotic
density of a discretized version of IBM called iterated random walk
was analyzed in the physics literature~\cite{turban2004iterated}.

In this work, we consider generalizations of the iterated random walk
which we call \textit{random walks at random times} (RWRT)
and \textit{dependent walks at random times} (DWRT) and relate them with a
different portion of the probability literature
concerning random walks in random scenery. This relation was first
noted by Khoshnevisan and Lewis~\cite{khoshnevisan1996iterated} who
stated that
there was ``a surprising connection between
the variations (of IBM) and H. Kesten and F. Spitzer's Brownian motion
in random
scenery.'' Later, in~\cite{khoshnevisan1999stochastic}, a form of
measure-theoretic duality was shown between the two processes.
Here, we present a mechanism on the discrete level which shows a
connection between the two processes.

We show that under suitable conditions, the scaling limits of RWRT
and DWRT are ($H$-sssi)-time $\alpha$-stable L\'{e}vy motions, a new
class of processes at random times. If $X(t)$ is a two-sided
$\alpha$-stable L\'{e}vy motion defined similarly to (\ref{two-sided}),
and $Y_t$ is an independent $\alpha$-stable L\'{e}vy motion, then we
call $X(Y_t)$ an \textit{iterated L\'{e}vy motion}. If, more generally,
$Y_t$ is an independent $H$-self-similar, stationary-increment
process (sssi), then an ($H$-sssi)-time $\alpha$-stable L\'{e}vy motion
is given by $X(Y_t)$. Assuming $0<H<1$, we will see that $X(Y_t)$ is
an $H/\alpha$-sssi process with Hurst exponent less than $1/\alpha$. They
naturally complement stable processes in random scenery which are
the limiting continuous processes of~\cite{KS} and~\cite{Wang03} and
which have Hurst exponents greater than $1/\alpha$ (Wang~\cite{Wang03}
considered only the case $\alpha=2$, but this was extended to $\alpha<2$
by Cohen and Dombry~\cite{CD}).

Random walks in random scenery (RWRS) and their scaling limits,
stable processes in random scenery, were first introduced
independently in~\cite{KS,Borodin}. The purpose of~\cite{KS} was to
introduce a new class of sssi processes given by the scaling limits
of RWRS. The scaling limits have integral representations as stable
integrals of local time kernels (of a process $Y_t$). When the
random scenery are $\alpha$-stable laws, they scale to the $\alpha$-stable
random measure against which the local time kernel is integrated. In
comparison, there is also an integral representation of
($H$-sssi)-time $\alpha$-stable L\'{e}vy motions given by the stable
integration of random kernels of type $1_{[0,Y_t]}$ against
$\alpha$-stable random measures.

When $Y_t$ is a generic $H$-sssi process, the stable processes in
random scenery discussed
above also include the model of~\cite{Wang03}.
Wang used ``dependent walks'' to collect the scenery, instead of random
walks, leading to a dependent walk in
random scenery (DWRS).
In particular, the dependent walks he used were discrete-time Gaussian
processes known to scale
to fractional Brownian motion (fBm).

Random reward schema are sums of independent copies of discrete
processes in random scenery.
In~\cite{CS,DG,CD} it was shown that the random reward schema of RWRS
and DWRS scale
to $H$-sssi symmetric $\alpha$-stable ({S$\al$S}) processes called
\textit{local time fractional {S$\al$S} motions} (with $H>1/\alpha$).
In this work, we show that the scaling limits of random reward schema
for RWRT and DWRT are $H$-sssi {S$\al$S} processes called
\textit{indicator fractional {S$\al$S} motions} (with $H<1/\alpha$) which
were introduced in~\cite{jung2010indicator}.

Note that fBm is the only sssi Gaussian process. Thus, when the scenery
has finite variance and $\alpha=2$,
local time fractional {S$\al$S} motions and indicator fractional
{S$\al$S} motions
reduce to fBm with $H>1/2$ and $H<1/2$, respectively.

As will be seen in Section~\ref{secmodels}, the mechanism behind the
connection between local time
and indicator fractional stable motions is the same as the mechanism
which connects Brownian motion in random
scenery (BMRS) with IBM.
In effect, the mechanism shows that the indicator kernels of the latter
processes
can be thought of as ``alternating'' versions of the local time kernels
of the former.

Together, local time fractional {S$\al$S} motions and indicator fractional
stable motions form a class of
fractional stable motions ($H$-sssi {S$\al$S} processes) which may be
thought of as one of several generalizations
of fractional Brownian motion. Their increment processes are stationary
and have the ergodic-theoretic property
of being null conservative, a concept introduced in \cite
{samorodnitsky2005null}.
This property distinguishes them from fractional stable motions which
have dissipative or positive conservative increment processes.
The most well-known examples of fractional stable motions with
dissipative or positive conservative increment processes
are the linear fractional stable motions and the real harmonizable
stable motions, respectively, as can be seen in Figure~\ref{figur1}.

\begin{figure}

\includegraphics{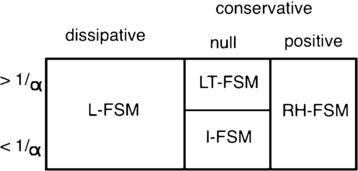}

\caption{$\alpha\in(1,2)$:
LT${} = {}$local time, I${} = {}$indicator,
L${} = {}$linear, RH${} = {}$real harmonizable.}\label{figur1}
\end{figure}

We also consider single-scenery random reward schema introduced in
\cite{dombry14functional}. Here we again take sums of identically
distributed RWRTs or DWRTs. However, the copies have a dependence
structure since they use the same ``single scenery.'' This
dependence will be made more explicit below. The scaling limits of
single-scenery random reward schema of RWRS and DWRS no longer have
stationary increments; however, they are easily seen to be $H$-ss
{S$\al$S} processes with $H>1/\alpha$. Similarly, the scaling limits of
single-scenery random reward schema of RWRT and DWRT are $H$-ss
{S$\al$S} processes with $H<1/\alpha$.

Finally, we also present a recursive construction of some local time
and indicator fractional stable motions. In particular, we show
that at each step of the recursion, the local times exist and are in
$L^2( \Om\times\R)$. The recursively defined processes give the first
examples of local time fractional stable motions for which the
processes collecting the scenery are neither fBm nor $\beta$-stable
L\'{e}vy motions. In the case $\alpha=2$, the processes are given by
integrals against Gaussian random measures, and the recursion
constructs fBm, of any dyadic Hurst parameter, using one Brownian
motion and a countable family of independent random Gaussian
measures.

As mentioned above, RWRT and, in particular, its scaling limit are in
some sense nonmonotonically subordinated processes. Usually one may
not undo a subordination---for example, one
can embed a stable process in Brownian motion, but cannot extract
Brownian motion from the stable process since the filtration is
strictly smaller.
However, we will see that when the scaling limit of the random time
process, $Y_t$, is fBm, one can undo the subordination using the
time-change $\tau_s = \inf_{t \geq0} \{t\dvtx Y_t=s\}$.
Extending such a time-change procedure to the kernels of indicator
fractional stable motions when $\alpha=2$, we
find that one can, in some sense, extract Brownian motion from
fractional Brownian motions satisfying $H<1/2$.

The rest of the paper is arranged as follows. In
Section~\ref{secmodels} we describe RWRTs and RWRSs. We also describe
their respective random reward schema and scaling limits. The section
ends with a statement describing new scaling limit results. The proofs
of the scaling weak convergence results are given in Section
\ref{secproofs}. In Section~\ref{secconstruction}, we describe the
recursive construction mentioned above, and complete the nontrivial
task of showing that the recursion produces processes that are well
defined. The main component of this task is showing that the local
times exist and are in $L^2(\Om\times\R)$. Finally, in Section~\ref{sectime-changes} we explain how to extract Brownian motion from
fBm with any Hurst parameter satisfying $H<1/2$.

\section{Discrete and continuous models}\label{secmodels}
\subsection{Random walks at random times and alternating random reward schema}
We start with a simple description of RWRS. Let
$\{\eta_\alpha(k)\}_{k\in\Z}$
be a set of i.i.d. symmetric random variables in the domain of
attraction of an {S$\al$S} law, $\alpha\in(0,2]$ with scale parameter
$\sigma=1$.
The family $\{\eta_\alpha(k)\}$
depicts the \textit{scenery} associated to the vertices of $\Z$.
Let
%
\begin{equation}
W(n):= \sum_{k=1}^n
\xi_\beta(k)
\end{equation}
be a symmetric random walk on $\Z$
with steps $\xi_\beta(k)$ in the domain of attraction of an {S$\beta
$S}
law, $\beta\in(1,2]$.
The random walk roams amidst the scenery $\{\eta_\alpha(k)\}$ which
are independent from the steps $\{\xi_\beta(k)\}$.

The cumulative scenery process
%
\begin{equation}
\label{deftotalreward} \Sp_n=\Sp_n(
\eta_\alpha,W):=\sum_{k=1}^n
\eta_\alpha\bigl(W(k)\bigr)
\end{equation}
is called a \textit{random walk in random scenery}. The scenery $\{\eta
_\alpha(k)\}$ can alternatively be
thought of as random \textit{reward} collected by the random walk when it
visits vertex~$k$.

We note that some authors call the pair $(W,\eta_\alpha(W))$ a RWRS
process (e.g.,~\cite{den2006random}).
Since most of the papers cited in this work refer to (\ref{deftotalreward}) as the RWRS, we stick with this notation.

Wang~\cite{Wang03} considered a slight modification of RWRS by using a
discrete approximation of a Gaussian process instead of a random walk:
%
\begin{equation}
\label{defvertexprocess} \Sp_n=\Sp_n(
\eta_\alpha,G_H):= \sum_{k=1}^n
\eta_\alpha\bigl(\bigl\lceil G_H(k)\bigr\rceil\bigr).
\end{equation}
Here $\lceil\cdot\rceil$ is the ceiling function, and $G_H(k)$ is
the partial sum of a stationary Gaussian process $X_k$ with
correlations $r(j-k) = \E X_jX_k$ satisfying
%
\begin{equation}
\label{condcorrelations} \sum_{j=1}^n
\sum_{k=1}^n r(j-k)\sim n^{2H},
\end{equation}
where $0<H<1$.
In addition to (\ref{defvertexprocess}), there have been myriad
generalizations of~(\ref{deftotalreward}),
and we refer the reader to the introduction of~\cite{guillotinlimit}
for a nice summary of such generalizations.

We refer to (\ref{defvertexprocess}) as a \textit{dependent walk in
random scenery} (DWRS).
In general, we consider $\Sp_n(\eta_\alpha,W_H)$ for which the
\textit{collecting process} $W_H(n)$ has stationary
increments and also satisfies the following {scaling limit properties}:
{\renewcommand{\theequation}{SLP}
\begin{equation}
\label{Wgamma}\quad
\cases{\mbox{\hphantom{ii}(i) } \displaystyle \lim_{n\to\infty}n^{-H}W_H
\bigl(\lfloor nt\rfloor\bigr)\Rightarrow{Y}_t, \qquad
\mbox{in $\DD\bigl([0,
\infty)\bigr)$},
\vspace*{2pt}\cr
\mbox{\hphantom{i}(ii) } {Y}_t \mbox{ is a nondegenerate }H
\mbox{-sssi process } \vspace*{2pt}\cr
\hspace*{164pt}\mbox{(${Y}_0=0$ by self-similarity)},
\vspace*{2pt}\cr
\mbox{(iii) }
\E|{Y}_t|<\infty,}
\end{equation}}

\noindent where $\DD([0,\infty))$ is equipped with the usual Skorohod topology
(also called the $J_1$-topology).

The condition that ${Y}_t$ be sssi guarantees that $\Sp_n$ scales to
an sssi process as well,
and this was in fact the original motivation of introducing $\Sp_n$ in~\cite{KS}.
Note that we use the stable parameter $\alpha\in(0,2]$ for the
scenery/reward and
consequently the increments of the RWRS/DWRS; however, we reserve the
stable parameter $\beta\in(1,2]$
for the increments of the collecting process (note that we require
$\beta>1$ in order to guarantee $\E|Y_t|<\infty$).

We introduce a variant of $\Sp_n$ in which the reward alternate in
sign and are associated with edges instead of vertices.
In our variant of RWRS, we use symmetric reward $\{\eta_\alpha(e)\}$
together with signs $\{\sigma_e\}, \sigma_e\in\{-1,+1\}$,
associated to the edge set of $\Z$. At time zero, all signs are plus
one, $\sigma_e(0)=+1$;
however, $(\sigma_e(n))_{n\ge0}$ is a process determined by the
collecting process in a manner discussed below.

Consider a discrete collecting process $W_H(n)$ satisfying condition
(\ref{Wgamma}). Note that our definition allows $|W_H(n)-W_H(n-1)|$ to
be greater than one. Let $\EEE_n$ be the set of connected edges
traversed on the $n$th step of $W_H(n)$, that is, the set of edges
between $W_H(n-1)$ and $W_H(n)$ [thus $\EEE_n$ has cardinality
$|W_H(n)-W_H(n-1)|$]. At the $n$th step, the process
$W_H(n)$:
\begin{itemize}
\item earns the signed reward $\sigma_e(n-1)\cdot\eta_\alpha(e)$
of all edges $e\in\EEE_n$ and then
\item reverses the sign $\sigma_e$ of each $e\in\EEE_n$ so that it
will receive the exact opposite reward the next time it traverses $e$.
\end{itemize}

A (\textit{dependent}) \textit{random walk at random time} (DWRT/RWRT) with a
nonmonotonic subordinating \textit{random time process} $W_H(n)$ is a process
\setcounter{equation}{6}
\begin{equation}
\label{def-totalreward} \Sm_n=\Sm_n(
\eta_\alpha, W_H):= \sum_{k=1}^{n}
\sum_{e\in\EEE
_k} \sigma_e(k-1)\cdot
\eta_\alpha(e),
\end{equation}
where $\sigma_e(k)\in\{-1,+1\}$ is the sign of $e$ at time $k$.

To explain the name of the process, consider that in an RWRT, due to
cancellation, each reward
$\eta_\alpha(e)$ contributes either one or zero net terms to the sum
(\ref{def-totalreward}).
When $e$ is to the right of the origin, the number of net terms is one
if and only if $W_H(n)$ is to the right of $e$,
and when $e$ is to the left of the origin, the number of net terms is
one if and only if $W_H(n)$ is to the left of $e$. It follows that
%
\begin{equation}
\label{def1} \Sm_n = \sum_{e\in[0,W_H(n)]}
\eta_{\alpha}(e),
\end{equation}
where $e\in[0,x]$ means that $e$ lies between $0$ and $x$ regardless
of the sign of $x$.
The partial sum of reward $\sum_{e\in[0,n]}\eta_{\alpha}(e)$ is
just a random walk $S_\alpha(n)$.
If we let $S_\alpha(0)=0$ and extend the random walk to negative times
in the natural way,
then thinking of time being determined by the location of $W_H(n)$, we have
%
\begin{equation}
\label{def1RW} \Sm_n = S_\alpha\bigl(W_H(n)
\bigr).
\end{equation}
As an aside, if we take (\ref{def1}) as our initial definition rather
than (\ref{def-totalreward}), then the reward
may equally well be placed on the vertices instead of the edges. The
reader may therefore choose to visualize this process
in any of several ways according to his or her own aesthetic preference.

The relationship between $\Sp_n$ and $\Sm_n$ should be clear. In
particular, when the collecting process is a
simple random walk $W(n)$, a relation is made by using a bijection
which assigns to each vertex $k$
either the edge lying to its left whenever the previous step of $W(n)$
was in the positive direction (right),
or the edge lying to its right whenever the previous step of $W(n)$ was
in the negative direction (left).
To extend the relation to other random walks, one must use a modified
version of $\Sp_n$ which,
when going from $x$ to $y$ on the $n$th step, collects a reward not
only from $y$, but
all vertices between $x$ and~$y$. In view of this relationship between
$\Sp_n$ and $\Sm_n$,
if $\P_s$ is the measure for the random scenery, and $\P'$ is the
measure for $W_H$,
then the processes $\Sp_n$ and $\Sm_n$ can be defined on the same
product space with measure $\P_s\times\P'$.

There is a further relationship between $\Sp_n$ and the variations
of $\Sm_n$ which mirrors the connection between BMRS and the
variations of IBM as presented in~\cite{khoshnevisan1999stochastic}.
In order to explain this relationship, it will be convenient to let the
collecting walk $W(n)$ be a simple random walk
and to have the reward for both $\Sp_n$ and $\Sm_n$ be attached to
the edges of $\Z$, rather than to the vertices.
For $p \in\mathbb{N}$, let the \textit{$p$th variation} of $\Sm_n$ be
defined as
%
\begin{equation}
V^{(p)}_n:= \sum_{i=1}^{n}
(\Sm_i - \Sm_{i-1})^p.
\end{equation}

\begin{theorem} \label{rant}
Suppose the i.i.d. reward $\{\eta_{\alpha}(e)\}$ are symmetric and
have finite $p$th moments. If $p$ is odd, then $V^{(p)}_n$ is another
RWRT, while if $p$ is even, then $V^{(p)}_n - n
\E[\eta_\aaa^{p}]$ is a RWRS. In both cases,\vadjust{\goodbreak} the reward collected
by the processes are given by $\{\zeta^{(p)}(e)\}$ where
\[
\zeta^{(p)}(e):= \eta_{\alpha}(e)^p - \E\bigl[
\eta_{\alpha}(e)^p\bigr].
\]
\end{theorem}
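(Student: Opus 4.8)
The plan is to reduce the statement to a one-line algebraic identity for the increments of $\Sm_n$, exploiting the fact that the sign process $\sigma_e(\cdot)$ is a functional of the collecting walk alone. First I would record the increments. Because $W(n)$ is a simple random walk, each step crosses exactly one edge, so $\EEE_i=\{e_i\}$ is a singleton and \eqref{def:(-)total reward} gives
\[
\Sm_i-\Sm_{i-1}=\sigma_{e_i}(i-1)\,\eta_\aa(e_i),\qquad \sigma_{e_i}(i-1)\in\{-1,+1\}.
\]
Raising to the $p$-th power and using that $s^p=s$ for $s\in\{-1,+1\}$ when $p$ is odd and $s^p=1$ when $p$ is even, I obtain $(\Sm_i-\Sm_{i-1})^p=\sigma_{e_i}(i-1)^p\,\eta_\aa(e_i)^p$, hence
\[
V^{(p)}_n=\sum_{i=1}^n \sigma_{e_i}(i-1)^p\,\eta_\aa(e_i)^p.
\]
Everything now follows from the parity of $p$ together with the observation, to be stressed, that the signs $\sigma_e(\cdot)$ are determined entirely by the trajectory of $W$ (each edge's sign flips on every traversal, starting at $+1$) and do not depend on the rewards being collected.

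For odd $p$ I would argue that $\sigma_{e_i}(i-1)^p=\sigma_{e_i}(i-1)$, so $V^{(p)}_n=\sum_{i=1}^n \sigma_{e_i}(i-1)\,\eta_\aa(e_i)^p$. This is exactly the defining sum \eqref{def:(-)total reward} for an RWRT driven by the same collecting walk $W$, but with rewards $\eta_\aa(e)^p$ in place of $\eta_\aa(e)$; the sign mechanism is identical because it is reward-independent. Since $\eta_\aa$ is symmetric and $p$ is odd, $\E[\eta_\aa^p]=0$, so $\eta_\aa(e)^p=\zeta^{(p)}(e)$, and these rewards are again i.i.d.\ and symmetric (an odd power of a symmetric variable). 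Equivalently, via \eqref{def:1}, $V^{(p)}_n=\sum_{e\in[0,W(n)]}\zeta^{(p)}(e)$, exhibiting $V^{(p)}_n$ as a bona fide RWRT.

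For even $p$ the signs disappear, $\sigma_{e_i}(i-1)^p=1$, leaving $V^{(p)}_n=\sum_{i=1}^n \eta_\aa(e_i)^p$, a plain sum of the reward collected on each edge crossed --- that is, the edge-indexed version of $\Sp_n$ described in the text (collecting from every edge traversed). Subtracting $n\,\E[\eta_\aa^p]$ recenters each summand, giving $V^{(p)}_n-n\,\E[\eta_\aa^p]=\sum_{i=1}^n \zeta^{(p)}(e_i)=\Sp_n(\zeta^{(p)},W)$, a RWRS with rewards $\zeta^{(p)}$.

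The computation itself is routine; the only real content --- and the point that needs care --- is the reward-independence of the sign process, which is what lets odd variations inherit the alternating RWRT structure wholesale. I would also note that $\{\zeta^{(p)}(e)\}$ are i.i.d.\ with finite moments as soon as $\eta_\aa$ has finite $p$-th moment, and that in the even case $\zeta^{(p)}$ is centered but generally \emph{not} symmetric; this is harmless, since identification as a RWRS only requires i.i.d.\ centered rewards. Finally, the restriction to a simple collecting walk is exactly what forces $\EEE_i$ to be a single edge; for a general walk the increment would mix several edges within one step, and raising to the $p$-th power would not factor edge-by-edge, so the clean identification would fail.
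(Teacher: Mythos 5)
Your proof is correct and takes essentially the same route as the paper's: both rest on the single-edge increment identity $(\Sm_i-\Sm_{i-1})^p=\sigma_{e_i}(i-1)^p\,\eta_\aa(e_i)^p$ and the parity of $p$, with symmetry of $\eta_\aa$ eliminating the recentering in the odd case. Your added remarks --- that the sign process is determined by the walk alone (not the rewards), and that $\zeta^{(p)}$ is centered but generally not symmetric for even $p$ --- are points the paper leaves implicit, but they do not alter the argument.
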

\begin{pf}
For $i \geq1$ we let $\EEE_i$ denote the edge between $W(i-1)$ and
$W(i)$. We then have
%
\begin{equation}
\label{rain} \Sp_n = \sum_{i=1}^{n}
\eta_{\alpha}(\EEE_i),\qquad \Sm_n = \sum
_{e\in[0,W(n)]} \eta_{\alpha}(e).
\end{equation}
Note that
%
\begin{equation}
\label{ffff} (\Sm_i - \Sm_{i-1})^p
1_{\{\EEE_i=e\}} = \bigl(\si_{e}(i-1) \eta_\aaa(e)
\bigr)^p 1_{\{\EEE_i=e\}}.
\end{equation}

If $p=2q$ is even, then the sign $ (\si_{e}(i-1) )^{2q}$ in
(\ref{ffff}) is irrelevant. Therefore,
%
\begin{equation}
\label{trid} V^{(2q)}_n - n \E\bigl[\eta_\aaa^{2q}
\bigr]= \Biggl(\sum_{i=1}^{n} (
\Sm_i - \Sm_{i-1})^{2q} \Biggr) - n \E\bigl[
\eta_\aaa^{2q}\bigr] = \sum_{i=1}^{n}
\zeta^{(2q)}(\EEE_i).
\end{equation}
Comparing with (\ref{rain}) shows this to be a RWRS with reward given
by $\{\zeta^{(p)}(e)\}$.

On the other hand, if $p=2q+1$ is odd, then the sign $ (\si_{e(k)}(i-1)
)^{2q+1} = \si_{e(k)}(i-1)$ in (\ref{ffff})
causes the same cancellation as we have with RWRT, and since $\eta_\aaa
$ is symmetric, there is no longer a need to subtract the expectation.
Thus, (\ref{ffff}) yields
%
\begin{equation}
\label{trid2} V^{(2q+1)}_n = \sum
_{i=1}^{n} (\Sm_i -
\Sm_{i-1})^{2q+1} = \sum_{e\in[0,W_H(n)]}
\zeta^{(2q+1)}(e).
\end{equation}
Comparing again with (\ref{rain}) shows this to be a RWRT with reward
given by $\{\zeta^{(p)}(e)\}$.
\end{pf}

We now compare this with the results of \cite
{khoshnevisan1999stochastic}. Let $I_s$ denote an IBM, fix an interval
$[0,t]$, and let
%
\begin{equation}
V_n^{(p)}(t) = \sum_{k=1}^{2^n t}
\bigl(I(T_{k+1,n})-I(T_{k,n})\bigr)^p,
\end{equation}
where $\{T_{k,n}\dvtx 1 \leq k \leq2^n t\}$ is an induced random
partition of the interval $[0,t]$; see~\cite{khoshnevisan1999stochastic},
Section 1, for details. Among other things,
Khoshnevisan and Lewis showed that, when properly renormalized,
$V_n^{(p)}(t)$ converges in distribution to
IBM when $p$ is odd and BMRS when $p$ is even; see Theorems~3.2,
4.4, 4.5 and the discussion in the middle of page 631. If we
consider the natural association between BMRS and RWRS on the one
hand and between IBM and RWRT on the other, we see that the simple
Theorem~\ref{rant} provides an intuitive backdrop for the much
more difficult results concerning the continuous case in
\cite{khoshnevisan1999stochastic}.\vadjust{\goodbreak}

We now return to study of $\Sm_n$ in the general case. We will need
processes extended to noninteger times, and we will therefore
denote the linear interpolation of $\Sm_n$ as
%
\begin{equation}
\label{defconttime} \Sm_t = \Sm_{\lfloor t\rfloor} + \bigl(t-\lfloor
t\rfloor\bigr) (\Sm_{\lceil t\rceil} - \Sm_{\lfloor t\rfloor}).
\end{equation}

Let us now describe the two different \textit{random reward schema} we
will use.
Let us start with an alternating version of the random reward schema
introduced in~\cite{CS}.
Let $\{(W_H^{(i)}(n))_{n\ge0}\}_{i\in\N}$ be independent copies of
$W_H(n)$ which are also independent from independent copies of the
reward $\{\{\eta_\alpha^{(i)}(e)\}_{e\in\Z}\}_{i\in\N}$. If $(c_n)$
is a sequence of integers such that $c_n\to\infty$, then
%
\begin{equation}
\label{defrrs} \sum_{i=1}^{c_n}
\Sm_t\bigl({\eta_\alpha^{(i)}},{W_H^{(i)}}
\bigr)
\end{equation}
is an \textit{alternating
random reward scheme}.

If we instead follow the single-scenery schema of \cite
{dombry14functional} and use
the same single copy of reward $\{\eta_\alpha^{(1)}(e)\}_{e\in\Z}$
for each copy of $W^{(i)}_H(n)$, then
%
\begin{equation}
\label{defrandomrewardschemaSS} \sum_{i=1}^{c_n}
\Sm_t\bigl({\eta_\alpha^{(1)}},{W_H^{(i)}}
\bigr)
\end{equation}
is a \textit{single scenery} alternating random reward scheme.

\subsection{Scaling limits of random reward schema}\label{secrandommeasure}

In this section we state some known results concerning the scalings of
RWRS and DWRS to stable
integral representations. These will motivate our results concerning
the scalings of RWRT and DWRT.

Let us first recall an important definition. Suppose $m$ is a $\sigma
$-finite measure
on a measurable space $(E,\BB)$, and that
\[
\BB_0 = \bigl\{A\in\BB\dvtx m(A)<\infty\bigr\}.
\]

\begin{definition}\label{defrandommeasure}
A \textit{{S$\al$S} random measure} $M$ with \textit{control measure}
$m$ is
a $\sigma$-additive set function on $\BB_0$ such that for all $A_i\in
\BB_0$:
\begin{longlist}[(2)]
\item[(1)] $M(A_1)\sim\SS_{\alpha}(m(A_1)^{1/\alpha})$;
\item[(2)] $M(A_1)$ and $M(A_2)$ are independent whenever $A_1 \cap
A_2 =\varnothing$,
\end{longlist}
where $\SS_\alpha(\sigma)$ is an {S$\al$S} random variable.

In particular, if $f\in L^\alpha(E,\BB,m)$, then
%
\begin{equation}
\label{defint} \int_E f(x) M(dx) \sim\SS_{\alpha}
\bigl(\bigl\|f(x)\bigr\|_{L^\alpha}\bigr).
\end{equation}
\end{definition}

Section 3.3 of~\cite{samorodnitsky1994stable} contains an introduction
to this topic. The immediate importance to us is that the scaling
limits of RWRS and DWRS are integrals with respect to stable random
measures,\vadjust{\goodbreak} where the integral kernel is the local time of a properly
scaled collecting process $\tilde W_{H'}$ (linearly interpolated) which
is either $G_{H'}$ or $S_\beta$ with $\beta\in(1,2]$. The process
$\frac{1}{n^{H'}}\tilde W_{H'}(nt)$ converges\vspace*{-1pt} weakly to a
scaling limit, denoted by ${\tilde {Y}}_t$, which is, respectively,
fBm-$H'$ in $\CC([0,\infty))$ or a $\beta$-stable L\'{e}vy motion in
$\DD([0,\infty))$. Let $(\Omega',\FF',\P')$ be the probability space of
${\tilde{Y}}_t$. It is known that ${\tilde{Y}}_t$ has a jointly
continuous local time $\ell_{\tilde {Y}}(t,x)$; this was shown for
$\beta$-stable L\'{e}vy motions in~\cite{boylan1964local} and for fBm
in~\cite{berman1974}. Moreover, for all $t\ge0$ and all $\alpha\in(0,
2]$, ${\tilde{Y}}_t$ satisfies
%
\begin{equation}
\label{finiteLT} \E' \int_\R\bigl|
\ell_{\tilde{Y}}(t,x)\bigr|^\alpha \,dx <\infty
\end{equation}
by Theorem 3.1 in
\cite{CS} and Lemma 2.1 in~\cite{DG}. Here we interpret
$\ell_{\tilde Y}(t)$ as the increasing family of random functions
which satisfy the occupation time formula
%
\begin{equation}
\label{ltdef} \int_{0}^{t} 1_A(
\tilde Y_s) \,ds = \int_{A} \ell_{\tilde Y}(t,x)
\,dx
\end{equation}
for any Borel set $A$.

Let $M_0(dx)$ be an {S$\al$S} random measure with Lebesgue control
measure which is independent from ${\tilde{Y}}_t$. Throughout this
subsection we will let
%
\begin{equation}
\label{defH} H=1-H'+H'/\alpha.
\end{equation}
A \textit{stable process in random scenery} is an $H$-sssi {S$\al$S} process given
by
%
\begin{equation}
\label{eqnwang} \Delta^H_t(M_0,
\tilde{Y}):=\int_{\R} \ell_{\tilde{Y}}(t,x)
M_0(dx),\qquad t\ge0,
\end{equation}
which is well defined by
(\ref{finiteLT}); see Chapter 3 of~\cite{samorodnitsky1994stable}.
Recall that $\eta_\alpha(k)$ is in the domain of attraction of an
{S$\al$S}
law. It was shown in~\cite{KS,Wang03,CD} that the following weak
convergence holds in $\CC([0,\infty))$:
%
\begin{equation}
\frac{1}{n^{H}}\Sp_{nt}(\eta_\alpha,\tilde
W_{H'}) \Rightarrow\Delta^H_t(M_0,
\tilde{Y}).
\end{equation}
Henceforth we will use $H'$ for the
Hurst parameter of the collecting process and $H$ for the Hurst
parameter of the resulting stable process in random scenery.

The Hurst exponent
$H=1-H'+H'/\alpha$
can be explained by using the local time scaling relation
%
\begin{equation}
\bigl(\ell_{\tilde{Y}}(ct,x),x\in\R,t\ge0 \bigr)\ed\bigl(c^{1-H'}
\ell_{\tilde{Y}}\bigl(t,x/c^{H'}\bigr),x\in\R,t\ge0 \bigr).
\end{equation}

In~\cite{CS}, weak convergence in $\CC([0,\infty))$ was shown for a
properly normalized random reward scheme
\[
c_n^{-1/\alpha}\sum
_{i=1}^{c_n} {n}^{-({\alpha+1})/({2\alpha})}Z_{nt}
\bigl({\eta_\alpha^{(i)}},{S_2^{(i)}}\bigr),
\]
where $Z_t$ is the linear interpolation of $\Sp_n$ in the same
manner as (\ref{defconttime}). The
\[
\bigl\{S_2^{(i)}(n)\bigr\}_{i\in\N}
\]
are
independent copies of mean zero, finite variance $(\beta=2)$ random
walks which have $H'=1/2$ explaining the exponent
$H=\frac{\alpha+1}{2\alpha}$. They collect independent copies of i.i.d.
reward $\{\eta_\alpha^{(i)}(k)\}_{i\in\N}$ which are also independent
from the random walks. Cohen and Samorodnitsky called the limiting
process an fBm-$1/2$ local time fractional stable motion. In
\cite{CD}, the discrete collecting process was generalized to
$G_{H'}$ and convergence to fBm-$H'$ local time fractional stable
motions for any $H'\in(0,1)$ was proved. In~\cite{DG}, a collecting
process scaling to $\beta$-stable L\'{e}vy motion ($\beta>1$) was used,
and consequently, other local time fractional stable motions were
obtained in the limit. Let us now explicitly state these collective
results.

Recall that $(\Omega',\FF',\P')$ is the probability space of
${\tilde{Y}}_t$.
Suppose $M_1(d\omega',dx)$ is an {S$\al$S} random measure that has
control measure $\P'\times\mbox{Lebesgue}$,
but lives on some other probability space $(\Omega,\F,\P)$. As
above, $\tilde W_{H'}$ is either $G_{H'}$ or $S_\beta$ with $\beta\in(1,2]$.
Letting $H$ be as in (\ref{defH}), in light of (\ref{finiteLT}) we
define a \textit{local time fractional stable motion} as the process
%
\begin{equation}
\label{defGamma} \Gamma^H_t(M_1,
\tilde{Y}):= \int_{\Om'\times\R} \ell_{\tilde
{Y}}\bigl(t,x;
\omega'\bigr) M_1\bigl(d\omega', dx\bigr),\qquad t
\ge0.
\end{equation}

Let $(c_n)$ be an integer sequence with $c_n\to\infty$, and let
$\{\eta_\alpha^{(i)}(k)\}$ be independent copies of i.i.d. reward in
the domain of attraction of an {S$\al$S} law.
The following weak convergence holds in $\CC([0,\infty))$ as $n\to
\infty$:
%
\begin{equation}\label{is0}\quad
c_n^{-1/\alpha}\sum_{i=1}^{c_n}
\frac{1}{n^{H}} Z_{nt}\bigl({\eta_\alpha^{(i)}},{
\tilde W^{(i)}_{H'}}\bigr) \Rightarrow\Gamma^H_t(M_1,
\tilde{Y}) \qquad\mbox{(independent scenery).}
\end{equation}

Let $M_2$ be a stable random measure with Lebesgue control measure with
the restriction that $\alpha\in(1,2]$,
and again let $H$ be as in (\ref{defH}). We may use (\ref{finiteLT})
and H\"{o}lder's inequality to define
%
\begin{equation}
\label{defLambda} \Lambda^H_t(M_2,
\tilde{Y}):= \int_\R\E' \ell_{\tilde
{Y}}
\bigl(t,x;\omega'\bigr) M_2(dx),\qquad t\ge0.
\end{equation}
Note that the scale parameter at time $t$ for (\ref{defLambda}) is
%
\begin{equation}
\label{scalepar} \sigma= \bigl\|\E' \ell_{\tilde{Y}}\bigl(t,x;
\omega'\bigr)\bigr\|_{L^\alpha(\R)}
\end{equation}
versus
$\sigma= \|\ell_{\tilde{Y}}(t,x;\omega')\|_{L^\alpha(\Omega
'\times\R)}$ for (\ref{defGamma}).
For $\alpha\in(1,2]$, a convergence result (in finite-dimensional
distributions) with respect to the single scenery case
was given in Theorem 4.2 of~\cite{dombry14functional}:
%
\begin{equation}\label{ss0}
c_n^{-1}\sum_{i=1}^{c_n}
\frac{1}{n^{H}} Z_{nt}\bigl({\eta_\alpha^{(1)}},{
\tilde W_{H'}^{(i)}}\bigr) \stackrel{\mathrm{f.d.d.}}{\lar}\Lambda^H_t(M_2,
\tilde{Y}) \qquad\mbox{(single scenery)}.
\end{equation}
As stated earlier, the process on the right-hand side is $H$-ss, but
using (\ref{scalepar}) one can see that
this process does not in general have stationary increments.

It is convenient to
write (\ref{defGamma}) and (\ref{defLambda}) as renormalized sums
of (\ref{eqnwang}) which appeal to
the stable central limit theorem and the law of large numbers,
respectively; see~\cite{CD,DG,dombry14functional}.
The former renormalization is applied to the entire integral in (\ref
{eqnwang}), and the convergence is in $\CC([0,\infty))$
whereas the latter renormalization applies only to the integral kernel
%
\begin{eqnarray}
\label{resultCLT}n^{-1/\alpha}\sum_{i=1}^n \bigl(
\Delta^H_t\bigr)^{(i)} &\Rightarrow&
\Gamma^H_t,
\\
\label{resultLLN}
\int_\R\Biggl(n^{-1}\sum
_{i=1}^n \ell_{\tilde{Y}}^{(i)}(t,x)
\Biggr) M_2(dx) &\stackrel{\mathrm{f.d.d.}}{\lar}& \Lambda^H_t.
\end{eqnarray}

\subsection{Scaling limits of alternating random reward schema}

We are now ready to state our results concerning the scaling limits of
$\Sm(t)$ and its associated random reward schema
(\ref{defrrs}) and (\ref{defrandomrewardschemaSS}).

Throughout this subsection we assume that
the discrete collecting process $W_{H'}(n)$ is extended to continuous
time by linear interpolation and
that it has the scaling limit ${Y}_t$ as given in condition (\ref{Wgamma}).
Independent copies of i.i.d. reward $\{\eta_\alpha^{(i)}(k)\}_{i\in
\N}$ are, as usual, in the domain of attraction
of an {S$\al$S} law (scale parameter $\sigma=1$) and independent
from the
random walks.
The space $(\Omega',\F',\P')$ supports ${Y}_t$, and the {S$\al$S}
random measures $M_i$ are as in the previous subsection.
Define the processes
%
\begin{eqnarray}
\label{defDelta2}\qquad
\Delta_H(t) = \Delta_H(t; M_0,Y) &:=& \int
_\R1_{[0,{Y}_t(\omega
')]}(x) M_0(dx),\qquad t\ge0,
\\
\label{defGamma2}
\Gamma_H(t) = \Gamma_H(t; M_1,Y)&:=& \int
_{\Om'\times\R} 1_{[0,{Y}_t(\omega')]}(x) M_1\bigl(d
\omega', dx\bigr),\qquad t\ge0,
\\
\label{defLambda2}
\Lambda_H(t) = \Lambda_H(t; M_2,Y)&:=& \int
_\R\E' 1_{[0,{Y}_t(\omega')]}(x)
M_2(dx),\qquad t\ge0,
\end{eqnarray}
which are analogous to (\ref{eqnwang}), (\ref{defGamma}) and (\ref
{defLambda}).

The above are all self-similar with common index $H=H'/\alpha$, and
(\ref{defGamma2}) and (\ref{defLambda2}) are {S$\al$S} processes. One
can also observe (see Theorem 2.2 in~\cite{jung2010indicator}) that
both (\ref{defDelta2}) and (\ref{defGamma2}) have stationary
increments. We call (\ref{defDelta2}) an (\textit{$H'$-sssi})\textit{-time
$\alpha$-stable L\'{e}vy motion} or more generally a \textit{stable
process at random time}. If $X(t)$ is a two-sided $\alpha$-stable
L\'{e}vy motion, then we may also write (\ref{defDelta2}) as
$X(Y_t)$. The process (\ref{defGamma2}) is an \textit{indicator
fractional stable motion} as introduced in~\cite{jung2010indicator}.
The process (\ref{defLambda2}) is the alternating analog of the
scaling limit of a single scenery random reward scheme introduced in
\cite{dombry14functional}.
%
\begin{theorem} \label{t1}
Let $H=H'/\alpha$, and let $c_n\to\infty$ as $n \to\infty$.
\begin{itemize}
\item The following convergence holds in f.d.d.:
%
\begin{equation}
\label{s} n^{-H} S_\alpha\bigl(W_{H'}(nt)\bigr)
\Rightarrow \Delta_H(t; M_0,Y).
\end{equation}
If the reward are symmetric with finite variance ($\alpha=2$), and
$n^{-H'}W_{H'}(nt)$ converges weakly in $\DD([0,\infty))$ ($\CC
([0,\infty))$), then (\ref{s}) also holds weakly in $\DD([0,\infty
))$ ($\CC([0,\infty))$,
resp.).
\item
If $n^{-H}|W_H(\lfloor nt\rfloor)|$ is uniformly integrable, then
%
\begin{eqnarray}\label{is}
&&
c_n^{-1/\alpha}\sum_{i=1}^{c_n}
n^{-H} \Sm_{nt}\bigl({\eta_\alpha^{(i)}},{W_{H'}^{(i)}}
\bigr) \nonumber\\[-8pt]\\[-8pt]
&&\qquad\stackrel{\mathit{f.d.d.}}{\lar} \Gamma_H(t; M_1,Y) \qquad\mbox{(independent
scenery)}.\nonumber
\end{eqnarray}
\item
If $\alpha> 1$, then
%
\begin{equation}\label{ss}
c_n^{-1}\sum_{i=1}^{c_n}
n^{-H} \Sm_{nt}\bigl({\eta_\alpha^{(1)}},{W_{H'}^{(i)}}
\bigr) \stackrel{\mathit{f.d.d.}}{\lar} \Lambda_H(t; M_2,Y) \qquad\mbox{(single
scenery)}.
\end{equation}
\end{itemize}
\end{theorem}
The interest of the first convergence result [to ($H'$-sssi)-time
$\alpha$-stable L\'{e}vy motion] lies in the fact that this seems to be
the first such Donsker-type theorem for iterated processes where the
random time process is not a subordinator, that is, not an increasing
L\'{e}vy process. In the case where the random time process is a
subordinator, similar convergence results are well known. In fact,
in Section 2.2 of~\cite{nane2011local}, such results are extended to
the case where the scenery have a certain dependence structure.
Their Donsker-type theorem shows convergence to an
$\alpha$-time fractional Brownian motion.

It is not hard to see that $\Delta_H(t), \Gamma_H(t)$, and
$\Lambda_H(t)$ are all continuous in probability. However, by
Theorem 10.3.1 in~\cite{samorodnitsky1994stable}, when $\alpha<2$,
$\Delta_H(t)$ and $\Gamma_H(t)$ are not sample continuous. In those
cases, the best we can hope for is weak convergence in
$\DD([0,\infty))$. We will see in the remark at the end of Section
\ref{secproofs}, that even this is a lot to ask. In that
remark, it is argued that even in the simplest cases, $\Delta_H(t)$ is
not even
in $\DD([0,\infty))$. In particular, the weak convergence in
$\CC([0,\infty))$ and $\DD([0,\infty))$ given in the first part of Theorem
\ref{t1} depends heavily on the fact that $\alpha=2$. In this case,
the scaling limit of $S_\alpha$ is continuous since it is simply
Brownian motion.

The condition that $n^{-H'}W_{H'}(\lfloor nt\rfloor)$ is uniformly
integrable holds when $W_{H'}$ is either $G_{H'}$ or $S_\beta$, $\beta>1$.
The former follows from a Gaussian concentration inequality which bounds
$n^{-H'}W_{H'}(\lfloor nt\rfloor)$ in $L^p$ for all $p\ge1$ (see
\cite{ledoux1991probability}, page 60),
and the latter follows from equation (5.s) in~\cite{legall1991range}
and the bound $\E(|X|1_A)\le\|X\|_{p}(\P(A))^{1/q}$.

\section{\texorpdfstring{Proof of Theorem \protect\ref{t1}}{Proof of Theorem 2.3}}\label{secproofs}

A convenient tool in proving convergence of the finite-dimensional
distributions is a diagonal convergence theorem of~\cite{dombrySWN}.
In order to state this theorem, we require some definitions.\vadjust{\goodbreak}

As usual $\eta_\alpha(k)$ is in the domain of attraction of the
{S$\al$S}
law with scale parameter $\sigma=1$, and it is the reward on the
edge between $k$ and $k+1$. For fixed positive $h$, define $\mu_h$
to be the random signed measure on $\R$ which is a.s. absolutely
continuous with respect to Lebesgue measure and whose random density
is given by
%
\begin{equation}
\frac{d\mu_h}{dx}(x)= h^{-1+1/\alpha}\sum_{k\in\Z}
\eta_\alpha(k) 1_{(hk,h(k+1)]}(x).
\end{equation}
For a locally integrable
function $f\in L^1_{\mathrm{loc}}$, define
%
\begin{equation}
\mu_h[f] = \int f \,d\mu_h:= \sum
_{k\in\Z} \eta_\alpha(k)h^{-1+1/\alpha}\int
_{hk}^{h(k+1)} f(x) \,dx.
\end{equation}
For $0<\alpha<1$, we will say that $(f_n)_{n\in\N}$
converges to $f$ in $\DD^\alpha$ if the following two conditions
hold:
\begin{itemize}
\item for any compact $K\subset\R$, $f_n{\bf1}_K$ converges to
$f{\bf1}_K$ in $L^1(\R)$;
\item there is some $\eta>\alpha^{-1}$ such that $f_n(x)=o(|x|^{-\eta
} )$ and $f(x)=o(|x|^{-\eta})$ as $x\to\infty$.
\end{itemize}

Let $\FF^\alpha=L^\alpha(\R)$ if $1\leq\alpha\leq2$ and $\FF^\alpha=\DD
^\alpha$ if $0<\alpha<1$. The following diagonal
convergence is shown in
Proposition~3.1 of~\cite{dombrySWN}; see also Proposition~3.1 of~\cite
{dombry14functional}.
%
\begin{prop}[(Dombry)]\label{propdombry}
Suppose $M_0(dx)$ is an $\alpha$-stable random measure, $\alpha\in
(0,2]$ and $(f_n)_{n\in\N}$ converges to $f$ in $\FF^\alpha$.
If $h_n\to0$ as \mbox{$n\to\infty$}, then the random variables $\mu
_{h_n}[f_n]$ converge weakly as \mbox{$n\to\infty$} and in particular,
%
\begin{equation}
\label{diagconv} \mu_{h_n}[f_n] \Rightarrow
\int_\R f M_0(dx).
\end{equation}
\end{prop}

We now start by showing convergence in f.d.d. for Theorem~\ref{t1}.
However, to reduce notation and simplify the presentation,
we only prove convergence of the one-dimensional distributions for some
fixed $t>0$.
The extension to f.d.d. in all three cases follows easily using the
Cram\'er--Wold device; see, e.g., Theorem 3.9.5 in~\cite{durrett2010probability}.

Also without loss of generality we use $n^{-H}A_{\lfloor nt\rfloor}$
instead of the linear
interpolation $n^{-H}A_{nt}$ since they differ by at most $n^{-H}\eta
_\alpha(k)$ which goes a.s. to $0$ as $n\to\infty$.

\subsection*{Convergence in f.d.d. for (\protect\ref{s})}
Fix $t\in[0,\infty)$. Let $X_n(t)=\frac{1}{n^{H'}}W_{H'}(\lfloor
nt\rfloor)$.
According to assumption (\ref{Wgamma}), $X_n(t) \Rightarrow{Y}(t)$. By
Skorohod's representation theorem, there is a common probability space
on which $\bar{X}_n\ed X_n(t)$, $\bar{Y}\ed Y(t)$ live and such that
$\bar{X}_n(\bar\omega)\to\bar{Y}(\bar\omega)$ for all $\bar
\omega\in\bar\Om$ (note that the bar includes the dependence on $t$).\vadjust{\goodbreak}

Fix an $\bar\omega$ and recall that $H=H'/\alpha$ and that for
$a<0$, we let $[0,a]:=[a,0]$. We have
%
\begin{eqnarray}
\label{unraveldefs}
&&
\mu_{n^{-H'}}[1_{[0,\bar{X}_n({\bar\omega})]}]
\nonumber\\
&&\qquad=
n^{H'- H'/\alpha}\sum_{k\in\Z}
\eta_\alpha(k) \int_{kn^{-H'}}^{(k+1)n^{-H'}}1_{[0,\bar{X}_n({\bar\omega})]}(x)
\,dx
\nonumber\\[-8pt]\\[-8pt]
&&\qquad= n^{H'- H'/\alpha}\sum_{k\in\Z} \eta_\alpha(k)
n^{-H'} 1_{\{
\bar{W}_{H'}(\lfloor nt\rfloor)>k\ge0\}\cup\{\bar{W}_{H'}(\lfloor
nt\rfloor)\le k<0\}}(\bar\omega)
\nonumber\\
&&\qquad= n^{-H}A_{\lfloor nt\rfloor}\bigl(\eta_\alpha,
\bar{W}_{H'}({\bar\omega})\bigr).\nonumber
\end{eqnarray}
By Proposition~\ref{propdombry} and the fact that
$
1_{[0,\bar{X}_n(\bar\omega)]} \to1_{[0,\bar{Y}(\bar\omega)]}
$ in $\FF^\alpha$,
we have that the one-dimensional distributions of
$n^{-H} \Sm_{\lfloor nt\rfloor}(\eta_\alpha,W_{H'})$ converge to
those of $\Delta_H(t; M_0,Y)$.

\subsection*{Convergence in f.d.d. for (\protect\ref{ss})}
For multiple independent walkers in the same scenery, we follow the
arguments of Proposition 2.4 in~\cite{dombry14functional}.
Fix $t\in[0,\infty)$. As in the proof of (\ref{s}), using Skorohod's
representation theorem and Proposition~\ref{propdombry},
we have for $\alpha\in(1,2]$,
\[
\frac{1}{c_n}\sum_{i=1}^{c_n}n^{-H'/\alpha}
\Sm_{\lfloor nt\rfloor
}\bigl(\eta_\alpha,\bar{W}_{H'}^{(i)}
\bigr) = \mu_{n^{-H'}}\Biggl[c_n^{-1}\sum
_{i=1}^{c_n}1_{[0,\bar{X}^{(i)}_n(\bar
\omega)]}\Biggr],
\]
where $\bar{X}_n^{(i)}(\bar\omega)\to\bar{Y}^{(i)}(\bar\omega)$
for each $i\in\N$ and for all $\bar\omega\in\bar\Om$ (the bar
includes the dependence on $t$).

We need only show the following converges in probability to zero as
\mbox{$n\to\infty$}:
%
\begin{equation}
\Biggl\|\frac{1}{c_n}\sum_{i=1}^{c_n}
(1_{[0,\bar
{X}^{(i)}_n(\bar\omega)]} ) -1_{[0,\bar{Y}^{(i)}(\bar\omega
)]}+1_{[0,\bar{Y}^{(i)}(\bar\omega
)]}- \E'1_{[0,{Y}_{t}]}
\Biggr\|_{L^\alpha(\R)},
\end{equation}
where for fixed $n$, the random variables $\bar{X}^{(i)}_n, 1\le i\le
c_n$ are i.i.d.
Also, for each fixed $i$, $\bar{X}^{(i)}_n$ converges a.s. to $\bar{Y}^{(i)}$.
We first show that as $n\to\infty$,
%
\begin{eqnarray}
\label{triangularWLLN} &&\Biggl\|\frac{1}{c_n}\sum
_{i=1}^{c_n} (1_{[0,\bar
{X}^{(i)}_n(\bar\omega)]}-1_{[0,\bar{Y}^{(i)}(\bar\omega)]} )
\Biggr\|_{L^\alpha(\R)}
\nonumber\\[-8pt]\\[-8pt]
&&\qquad\le\frac{1}{c_n}\sum_{i=1}^{c_n}
\| 1_{[0,\bar
{X}_n^{(i)}(\bar\omega)]}-1_{[0,\bar{Y}^{(i)}(\bar\omega)]}
\|_{L^\alpha(\R)}\cip0.\nonumber
\end{eqnarray}
Consider a triangular array such that for each fixed $n$, there are
$c_n$ i.i.d. random variables
\[
\bigl(U^{(n)}_i \bigr)_{1\le i\le c_n}:= \bigl(\|
1_{[0,\bar
{X}_n^{(i)}(\bar\omega)]}-1_{[0,\bar{Y}^{(i)}(\bar\omega)]}
\|_{L^\alpha(\R)}
\bigr)_{1\le i\le c_n}
\]
in each row, and for each fixed $i$, the column of random variables
$(U_i^{(n)})_{n\in\N}$ converges weakly to zero. For such triangular arrays,
the following weak law holds (see Proposition 2.4 in \cite
{dombry14functional}):
%
\begin{equation}
\label{Weaklaw} \frac{1}{c_n}\sum_{i=1}^{c_n}
U_i^{(n)} \cip0 \qquad\mbox{as } n\to\infty,
\end{equation}
thus proving (\ref{triangularWLLN}).

Since $\E\|1_{[0,{Y}_{t}]}\|_{L^\alpha(\R)}<\infty$, the strong law
of large numbers for Banach space valued random
variables implies that the following converges a.s. in~$L^\alpha(\R)$:
%
\begin{equation}
\frac{1}{c_n}\sum_{i=1}^{c_n}1_{[0,\bar{Y}^{(i)}(\bar\omega)]}
\to\E'1_{[0,{Y}_{t}]},
\end{equation}
thus completing the proof of one-dimensional weak convergence for
(\ref{ss}).

\subsection*{Convergence in f.d.d. for (\protect\ref{is})}
We will mimic the arguments of~\mbox{\cite{KS,DG,CD}}.
Let
\[
1_{\{W_{H'}(nt;k)\}}\bigl(\omega'\bigr):=1_{\{W_{H'}(\lfloor nt\rfloor
)>k\ge0\}
\cup\{W_{H'}(\lfloor nt\rfloor)\le k<0\}}\bigl(
\omega'\bigr).
\]
Using the last equality in (\ref{unraveldefs}), we have
%
\begin{eqnarray}
\label{eq40}
&&\E\exp\Biggl(\sum_{\ell=1}^{c_n}i
\th c_n^{-1/\alpha}n^{-H} \Sm_{\lfloor nt\rfloor}\bigl({
\eta_\alpha^{(\ell)}},{W_{H'}^{(\ell
)}}\bigr)
\Biggr)
\nonumber\\
&&\qquad=
\E\exp\Biggl(\sum_{\ell=1}^{c_n}i
\th c_n^{-1/\alpha} n^{-H}\sum
_{k\in\Z} \eta^{(\ell)}_\alpha(k) 1_{\{W_{H'}^{(\ell
)}(nt;k)\}}
\bigl(\omega'\bigr) \Biggr)
\\
&&\qquad= \biggl(\E' \biggl[ \prod_{k\in\Z}
\phi_{\eta_\alpha}\bigl(v_n\bigl(\omega',k\bigr) \bigr)
\biggr] \biggr)^{c_n},\nonumber
\end{eqnarray}
where $\phi_{\eta_\alpha}$ is the real-valued characteristic
function of a symmetric reward $\eta_\alpha$ and
%
\begin{equation}
v_{n}\bigl(\omega',k\bigr)=\th c_n^{-1/\alpha}
n^{-H} 1_{\{W_{H'}(nt;k)\}
}\bigl(\omega'\bigr),\qquad k\in\Z.
\end{equation}

Suppose $\phi_\alpha(v)=\exp(-|v|^\alpha)$ is the characteristic
function of the {S$\al$S} law of scale parameter $\sigma=1$.
We show that the following asymptotic holds as $n\rightarrow\infty$:
%
\begin{equation}
\label{cd1} \E' \biggl[ \prod_{k\in\Z}
\phi_{\eta_\alpha}\bigl(v_n\bigl(\omega',k\bigr) \bigr)
\biggr] = \E' \biggl[ \prod_{k\in\Z}
\phi_{\alpha}\bigl(v_n\bigl(\omega',k\bigr) \bigr)
\biggr] +o\bigl(c_n^{-1}\bigr).
\end{equation}
If $(x_i)_{i\in\Z}$ and $(x_i')_{i\in\Z}$ are sequences in $[-1,1]$
with only finitely many terms not equal to one, then
%
\begin{equation}
\biggl\llvert\prod_{i\in\Z}x'_i
-\prod_{i\in\Z}x_i\biggr\rrvert\leq\sum
_{i\in\Z} \bigl\llvert x'_i -
x_i\bigr\rrvert.\vadjust{\goodbreak}
\end{equation}
Letting
%
\begin{equation}
g(y)=\sup_{|x|\leq y} |x|^{-\alpha}\bigl\llvert\phi_{\eta_\alpha}(x)-
\phi_{\alpha}(x)\bigr\rrvert,\qquad x\neq0,
\end{equation}
we have
%
\begin{eqnarray}\label{cd2}
&&c_n\biggl\llvert\prod_{k\in\Z}
\phi_{\eta_\alpha} \bigl( v_n\bigl(\omega',k\bigr)
\bigr)- \prod_{k\in\Z} \phi_\alpha\bigl(
v_n\bigl(\omega',k\bigr) \bigr) \biggr\rrvert
\nonumber
\\
&&\qquad\leq c_n \sum_{k\in\Z} \bigl\llvert
\phi_{\eta_\alpha} \bigl( v_n\bigl(\omega',k\bigr)
\bigr)- \phi_{\alpha} \bigl( v_n\bigl(\omega',k
\bigr) \bigr) \bigr\rrvert
\nonumber
\\
&&\qquad\leq g\Bigl(\sup_{k\in\Z} \bigl|v_n\bigl(\omega',k
\bigr)\bigr|\Bigr) \sum_{k\in\Z}c_n
\bigl|v_n\bigl(\omega',k\bigr)\bigr|^\alpha
\\
&&\qquad= g\bigl(\th c_n^{-1/\alpha} n^{-H}\bigr) \sum
_{k\in\Z} \bigl\llvert n^{-H} \th
1_{\{W_{H'}(nt;k)\}}\bigl(\omega'\bigr)\bigr\rrvert^\alpha
\nonumber
\\
&&\qquad= g\bigl(\th c_n^{-1/\alpha} n^{-H}\bigr) |
\th|^\alpha n^{-H'} \bigl|W_{H'}\bigl(\lfloor nt\rfloor;
\omega'\bigr)\bigr|.\nonumber
\end{eqnarray}

By assumption, $n^{-H'}|W_{H'}(\lfloor nt\rfloor;\omega')|$ converges
weakly and is bounded in~$L^1$, so to prove
(\ref{cd1}) we need only show that $g(\th c_n^{-1/\alpha} n^{-H})$ is
bounded and converges in probability to $0$.
Since $\eta_\alpha$ is in the domain of attraction of the {S$\al$S} law
with $\sigma=1$,
by the stable central limit theorem, we have that as $v\to0$,
\[
\phi_{\eta_\alpha}(v) = \phi_{\alpha}(v)+o\bigl(|v|^\alpha
\bigr).
\]
Thus $g$ is bounded, continuous and vanishes at $0$. Equation (\ref
{cd1}) follows since $\th c_n^{-1/\alpha} n^{-H}$ goes to zero.

Let $\{(\zeta_\alpha^{(\ell)}(k))_{k\in\Z}\}_{\ell\in\N}$ be
independent copies of i.i.d. reward such that $\zeta_\alpha^{(0)}(1)$
has an {S$\al$S} law with
scale parameter $\sigma=1$. Using (\ref{cd1}), the $c_n$th root of
(\ref{eq40}) is equal to
%
\begin{eqnarray}
&&\E\exp\bigl(i\th c_n^{-1/\alpha}n^{-H}
\Sm_{\lfloor nt\rfloor
}\bigl({\eta_\alpha^{(1)}},{W_{H'}^{(1)}}
\bigr) \bigr)
\nonumber
\\
&&\qquad=
\E\exp\bigl(i\th c_n^{-1/\alpha}n^{-H}
\Sm_{\lfloor
nt\rfloor}\bigl({\zeta_\alpha^{(1)}},{W_{H'}^{(1)}}
\bigr) \bigr)+o\bigl(c_n^{-1}\bigr)
\nonumber\\
&&\qquad=
\E'\exp\biggl(-c_n^{-1}n^{-H'}
\sum_{k\in\Z}\bigl(\theta1_{\{
W_{H'}(nt;k)\}}\bigl(
\omega'\bigr)\bigr)^\alpha\biggr)+o\bigl(c_n^{-1}
\bigr)
\\
&&\qquad=
\E'\exp\bigl(-c_n^{-1}n^{-H'}\bigl|W_{H'}
\bigl(\lfloor nt\rfloor;\omega'\bigr)\bigr|\theta^\alpha\bigr)+o
\bigl(c_n^{-1}\bigr)
\nonumber\\
&&\qquad= \E' \bigl(1-c_n^{-1}n^{-H'}\bigl|W_{H'}
\bigl(\lfloor nt\rfloor;\omega'\bigr)\bigr|\theta^\alpha+o
\bigl(c_n^{-1}\bigr) \bigr).\nonumber
\end{eqnarray}

If $b_n$ is such that $c_nb_n\to\lambda$, then $(1+b_n)^{c_n}\to
e^\lambda$.
Letting
\[
b_n= -c_n^{-1}\E'
\bigl(n^{-H'}\bigl|W_{H}\bigl(\lfloor nt\rfloor\bigr)\bigr|\bigr)
\theta^\alpha
\]
and using the assumption of uniform integrability, we have that
%
\begin{equation}
\bigl(1-c_n^{-1}\E'\bigl(n^{-H'}\bigl|W_{H}
\bigl(\lfloor nt\rfloor\bigr)\bigr|\bigr)\theta^\alpha+o\bigl(c_n^{-1}
\bigr) \bigr)^{c_n}\to e^{-|\theta|^\alpha\E'|Y_t|}
\end{equation}
as required.

\subsection*{Tightness in $\DD([0,\infty))$ and $\CC([0,\infty))$
for (\protect\ref{s})}

Suppose that $\alpha=2$ so that $n^{-1/\alpha}S_\alpha(n^{1/\alpha
}t)$ converges weakly in $\CC(\R)$ to a two-sided Brownian motion
$\tilde{B}_t$.
By (\ref{Wgamma}) and the independence of $S_\alpha(t)$
and $W_{H'}(t)$, the joint process
\[
\bigl(n^{-H'}S_\alpha\bigl(n^{H'}t
\bigr),n^{-H'}W_{H'}(nt)\bigr)
\]
converges weakly to $(\tilde{B}_t,Y_t)$ in $\CC(\R) \times\DD
([0,\infty))$. The weak convergence of $n^{-H'}S_\alpha(W_{H'}(nt))$
in $\DD([0,\infty))$ therefore follows from the continuous mapping
theorem, provided that $(x,y) \longrightarrow x \circ y$ is continuous
from $\CC(\reals) \times\DD([0,\infty))$ to $\DD([0,\infty))$.

The topologies on $\CC(\R)$ and $\DD([0,\infty))$ are first
countable, so proving sequential continuity suffices. Suppose $x_n
\longrightarrow x$ in $\CC(\reals)$ and $y_n \longrightarrow y$ in
$\DD([0,\infty))$, and let $T$ be a continuity point of $y$. By the
definition of convergence on $\DD([0,\infty))$, we must show that
there is a sequence of homeomorphisms $\lambda^T_n$ from $[0,T]$ onto
$[0,T]$ such that $\lambda^T_n$ converges uniformly to the identity
and $x_n \circ y_n \circ\lambda^T_n$ converges uniformly to $x \circ y$.

Let $\eps> 0$ be given. Since $y_n \to y$ in $\DD([0,\infty))$,
there are homeomorphisms $\lambda^T_n$ from $[0,T]$ onto $[0,T]$ such
that $\lambda^T_n$ converges uniformly to the identity, and $y_n \circ
\lambda^T_n$ converges uniformly to $y$. The set $\AA=\bigcup_n
y_n([0,T])$ is bounded, so $x_n$ converges uniformly to $x$ on $\bar
\AA$. Thus, $x$ is uniformly continuous on $\bar\AA$, and thus on
$\AA$.

Choose\vspace*{1pt} $\delta>0$ such that $|x(y_1)-x(y_2)| < \frac{\eps}{2}$ for
$y_1, y_2 \in\AA$ and $|y_1-y_2|<\delta$. Next, find $M_1>0$ such
that $\sup_{t \in[0,T]} |y_n \circ\lambda_n^H(t)-y(t)|< \delta$
whenever $n>M_1$, and find $M_2>0$ such that $\sup_{y \in\AA}
|x_n(y) - x(y)| < \frac{\eps}{2}$ whenever $n>M_2$. Then, whenever
$n>\max(M_1,M_2)$, we have
%
\begin{eqnarray}
&&\bigl|x_n \circ y_n \circ\lambda_n^H(t)
- x \circ y(t)\bigr|
\nonumber
\\
&&\qquad \leq \bigl|x_n \circ y_n \circ
\lambda_n^H(t) - x \circ y_n \circ
\lambda_n^H(t)\bigr| + \bigl|x \circ y_n \circ
\lambda_n^H(t) - x \circ y(t)\bigr|
\\
&&\qquad \leq \frac{\eps}{2} + \frac{\eps}{2} = \eps\nonumber
\end{eqnarray}
for all $t \in[0,T]$. Thus, $x_n \circ y_n \circ\lambda_n^H$
converges uniformly to $x \circ y$ showing continuity of the
composition map. The same argument holds if $n^{-H'}W_{H'}(nt)$
converges weakly in $\CC([0,\infty))$, except that proving the
continuity of the composition map on $\CC(\reals) \times\CC
([0,\infty))$ is even simpler.
\begin{remark*}
We thank an anonymous referee for the above tightness proof which
simplifies our original proof. The referee also noticed the following
informative observation.\vadjust{\goodbreak} If $\alpha<2$, then $S_\alpha$ scales to an
$\alpha$-stable L\'{e}vy motion, $X(t)$. Fix $\eps>0$ and let $\tau
_{\eps}>0$ be the first positive time such that $|X(\tau_\eps)-\lim
_{t\to\tau_\eps^-}X(t)|>\eps$. Consider the simple case where
$W_{H'}$ scales to a Brownian motion, $B_t$. Let $\tau$ be the first
time $B_t-\tau_\eps$ hits $0$. As is well known, $B_t-\tau_\eps$
oscillates around $0$ immediately, thus $\lim_{t\to\tau^+}X(B_t)$
does not exist a.s. This argument, which can be made rigorous, shows
that even in the elementary case where the collecting process
scales to Brownian motion, the process $X(B_t)$ is not cadlag.
\end{remark*}

\section{A recursive construction of some fractional stable
motions}\label{secconstruction}
Throughout this section we will suppose that $\alpha\in(1,2]$. We
present two related recursive constructions of some $H$-sssi
processes. The first recursion produces stable processes in random
scenery, while the second recursion produces local time and
indicator fractional stable motions. Note that only the second
recursion leads to {S$\al$S} processes. Since fBm is the only sssi
Gaussian process, when $\alpha=2$ the second construction gives us fBm.
In particular, if on the first step of the recursion we use Brownian
motion as the collecting process (or random time process), then we
obtain fBm of any dyadic Hurst parameter.

Although the first construction does not in general lead to
$\alpha$-stable processes, we will see that the finite-dimensional
distributions of the processes have finite $\alpha$ moments, and thus one
can appeal to the stable central limit theorem and normalize partial
sums of independent copies of the stable processes in random scenery
in order to get honest stable processes [in a manner similar to (\ref
{resultCLT})].

Let $Y^\varnothing_t$ be an $H$-sssi process satisfying the four
conditions of Theorem~\ref{pp} below. Consider the vector ${v}=(v_1,
\ldots,v_n)$ with coordinates $v_j \in\{+,-\}$. Let us use the
notation $\hat{{{v}}}$ to denote ${{v}}$ truncated by removing the
last element, that is, $\hat{{{v}}}=(v_1, \ldots,v_{n-1})$. The
empty set $\varnothing$ will denote the empty vector.

We define the process $Y^{v}_t$ recursively from $Y^{\hat{{{v}}}}_t$
and an $\alpha$-stable random measure $M_0(dx)$, with $\alpha\in(1,2]$,
assumed to be independent from $Y^{\hat{v}}_t$. If $v_n=(+)$, we let
%
\begin{equation}
\label{yp} Y^{v}_t:= \int_{\R}
\ell_{Y^{\hat
{v}}}(t,x) M_0(dx),
\end{equation}
and if $v_n=(-)$, we let
%
\begin{equation}
\label{ym} Y^{v}_t:= \int_{\R}
1_{[0,Y^{\hat{v}}_t]}(x) M_0(dx).
\end{equation}

The second recursive procedure is defined similarly. We again use
vectors, now denoted $w=(w_1,\ldots,w_n)$, with coordinates taking
one of two different values. However, in order to distinguish
between the two procedures, we let $w_j\in\{\ast,\times\}$. As
before, we let $\hat{{{w}}}=(w_1, \ldots,w_{n-1})$.

Once again $Y^{w}_t$ is defined recursively from $Y^{\hat{w}}_t$ and
an $\alpha$-stable random measure $M_1$ with $\alpha\in(1,2]$; however,\vadjust{\goodbreak}
the control measure of $M_1$ is no longer Lebesgue measure as it was
in the case of $M_0$. Suppose that $(\Omega',\FF',\P')$ is the
probability space of ${Y}^{\hat{w}}_t$. Then, just as in
(\ref{defGamma}), $M_1(d\omega'\times dx)$ has control measure
$\P'\times\mbox{Lebesgue}$ and lives on some other probability space
$(\Omega,\FF,\P)$. If $w_n=(\ast)$, we let
%
\begin{equation}
\label{yp2} Y^{w}_t:= \int_{\Omega'\times\R}
\ell_{Y^{\hat{w}}}(t,x) \bigl(\omega'\bigr) M_1\bigl(d
\omega'\times dx\bigr),
\end{equation}
and if $w_n=(\times)$, we let
%
\begin{equation}
\label{ym2} Y^{w}_t:= \int_{\Omega'\times\R}
1_{[0,Y^{\hat{w}}_t(\omega')]}(x) M_1\bigl(d\omega'\times dx\bigr).
\end{equation}

We must\vspace*{1pt} show that the above recursions makes sense, that is, that the
integrals are well defined. In general, it is known that $H$-sssi
{S$\al$S} processes have $L^2(\R)$ local times almost surely. This
almost gets us to where we want to be; however, there are two
separate issues with which we must deal.

According to (\ref{defint}) we need that the integral kernels of
(\ref{yp}) and (\ref{ym}) are in $L^\alpha(\R)$ [which easily follows
if they are in $L^2(\R)$], but (\ref{yp}) and (\ref{ym}) are not in
general {S$\al$S} processes, and thus we need an extra argument to
show that
they have $L^2(\R)$ local times almost surely.

The second issue concerns (\ref{yp2}) and (\ref{ym2}) which are
{S$\al$S} processes, but are well defined only if the local times are in
$L^\alpha(\Om'\times\R)$. In other words, we will need the $\alpha$th
moment of the local times to be integrable. To solve these two
issues, we use the following result.
%
\begin{theorem} \label{pp}
\mbox{Suppose $Y_t=Y_t(\omega')$ is an $H'$-sssi process which satisfies:}

\begin{longlist}[(a)]
\item[(a)] $0< \E|Y_1| < \infty$.
\item[(b)] $Y_t$ has a local time satisfying $0< \E\int_{\R}\ell
_Y(t,x)^2\,dx < \infty$.
\item[(c)] $Y_1$ has a bounded continuous density.
\item[(d)] $\E[\sup_{t \in[0,1]} |Y_t|]<\infty$.
\end{longlist}
Then the processes
%
\begin{eqnarray}
Y^{(+)}_t &=& \int_{\R}
\ell_Y(t,x) M_0(dx),
\\
\label{minus}
Y^{(-)}_t &=& \int_{\R}
1_{[0,Y_t]}(x) M_0(dx),
\\
Y^{(\ast)}_t &=& \int_{\Om'\times\R}
\ell_Y(t,x) \bigl(\omega'\bigr) M_1\bigl(d
\omega'\times dx\bigr),
\\
\label{times}
Y^{(\times)}_t &=& \int_{\Om'\times\R}
1_{[0,Y_t(\omega')]}(x) M_1\bigl(d\omega'\times dx\bigr)
\end{eqnarray}
are\vspace*{1pt} well-defined $H$-sssi
processes satisfying \textup{(b)--(d)}, where $H = 1 - H' + H'/\alpha$ for
$Y^{(+)}$ and $Y^{(\ast)}$ and $H =H'/\alpha$ for $Y^{(-)}$ and
$Y^{(\times)}$. Moreover, all four processes have finite $\alpha$
moments which implies they also satisfy \textup{(a)}.\vadjust{\goodbreak}
\end{theorem}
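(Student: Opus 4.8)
The plan is to establish, for each of the four processes $Z\in\{Y^{(+)},Y^{(-)},Y^{(\ast)},Y^{(\times)}\}$, first that the defining stable integral converges (finiteness of the scale parameter, via \eqref{def:int}), and then that the regularity conditions (b)--(d) and the moment bound pass to $Z$; self-similarity and stationary increments I would read off from the scaling and increment identities of the kernels. Indeed, $H$-self-similarity is immediate: for the indicator processes $Y_{ct}\ed c^{H'}Y_t$ gives $1_{[0,Y_{ct}]}(x)\ed 1_{[0,Y_t]}(x/c^{H'})$, and the scaling of the Lebesgue-controlled measures $M_0,M_1$ extracts a factor $c^{H'/\aa}$, so $H=H'/\aa$; for the local-time processes the relation $\ell_Y(ct,x)\ed c^{1-H'}\ell_Y(t,x/c^{H'})$ combined with the same measure scaling gives $H=1-H'+H'/\aa$. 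Stationary increments follow from the facts that $\ell_Y(t,\cdot)-\ell_Y(s,\cdot)$ is the local time of the increment process $r\mapsto Y_{s+r}-Y_s$ shifted in space by $Y_s$, and that $1_{[0,Y_t]}-1_{[0,Y_s]}=\pm\,1_{[0,Y_t-Y_s]}(\cdot-Y_s)$, combined with the stationarity of the increments of $Y$ and the translation invariance of the control measures, exactly as in \cite{CS} and Theorem 2.2 of \cite{jung2010indicator}; note that $Y^{(+)},Y^{(-)}$ are of the form \eqref{eqn:wang},\eqref{def:Delta2} and $Y^{(\ast)},Y^{(\times)}$ of the form \eqref{def:Gamma},\eqref{def:Gamma2}.

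For well-definedness I check that the kernels lie in the relevant $L^\aa$ space. The indicator kernels are trivial, since $\|1_{[0,Y_t]}\|_{L^\aa(\R)}^\aa=|Y_t|<\ff$ a.s.\ and $\|1_{[0,Y_t]}\|_{L^\aa(\Om'\times\R)}^\aa=\E'|Y_t|<\ff$ by (a)/(d); for $Y^{(+)}$, condition (b) gives $\ell_Y(t,\cdot)\in L^2(\R)$ a.s.\ and (d) makes its support (the range of $Y$ on $[0,t]$) a.s.\ bounded, so $\ell_Y(t,\cdot)\in L^\aa(\R)$ a.s.\ by H\"older. The only real computation is for $Y^{(\ast)}$, where I need $\E'\int_\R\ell_Y(t,x)^\aa dx<\ff$. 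Writing $M=\sup_{s\le t}|Y_s|$, so that $\ell_Y(t,\cdot)$ is supported in $[-M,M]$, H\"older on $\R$ and then on $\Om'$ (with exponents $2/\aa$ and $2/(2-\aa)$) gives
\bea
\E'\int_\R\ell_Y(t,x)^\aa\,dx
&\le& \E'\Big[\Big(\int_\R\ell_Y(t,x)^2\,dx\Big)^{\aa/2}(2M)^{(2-\aa)/2}\Big]\nn\\
&\le& \Big(\E'\int_\R\ell_Y^2\Big)^{\aa/2}\big(2\,\E'M\big)^{(2-\aa)/2},\nn
\eea
which is finite by (b) and (d).

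The heart of the argument is propagating (b): that $Z$ has a jointly measurable local time with $0<\E\int_\R\ell_Z(t,x)^2dx<\ff$. Here I would invoke the Fourier criterion. Since $\int_\R e^{i\xi x}\ell_Z(t,x)\,dx=\int_0^t e^{i\xi Z_s}ds$, Plancherel and Tonelli (the integrand being real and nonnegative by symmetry) give
\bea
\E\int_\R\ell_Z(t,x)^2dx
&=&\frac1{2\pi}\int_\R\int_0^t\int_0^t\E\,e^{i\xi(Z_u-Z_v)}\,du\,dv\,d\xi\nn\\
&=&\frac1{2\pi}\Big(\int_\R\phi_{Z_1}(w)\,dw\Big)\int_0^t\int_0^t|u-v|^{-H}\,du\,dv,\nn
\eea
where I used $H$-sssi to write $\E\,e^{i\xi(Z_u-Z_v)}=\phi_{Z_1}(\xi|u-v|^H)$ and substituted $w=\xi|u-v|^H$. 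The double time integral is finite precisely because $H<1$, which holds in all four cases as $\aa>1$ (indeed $H=1-H'(1-1/\aa)<1$ and $H=H'/\aa<1$). Hence everything reduces to showing $\int_\R\phi_{Z_1}(w)\,dw=2\pi p_{Z_1}(0)\in(0,\ff)$; an integrable characteristic function also yields a bounded continuous density by Fourier inversion, so condition (c) comes for free.

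Conditionally on $\om'$ each $Z_1$ is \sas\ with scale $S=S(\om')$ --- equal to $\|\ell_Y(1,\cdot)\|_{L^\aa(\R)}$ for $Y^{(+)}$, to $|Y_1|^{1/\aa}$ for $Y^{(-)}$, and to a finite positive constant for the genuinely \sas\ processes $Y^{(\ast)},Y^{(\times)}$ --- so that $\int_\R\phi_{Z_1}=c_\aa\,\E'[S^{-1}]$. For $Y^{(\ast)},Y^{(\times)}$ there is nothing to prove, and the main obstacle is the inverse-moment bound $\E'[S^{-1}]<\ff$ for the two mixed processes. For $Y^{(-)}$, the bounded-density hypothesis (c) gives $\P(|Y_1|<\eps)\le C\eps$, whence $\E'|Y_1|^{-1/\aa}=\int_0^\ff\P(|Y_1|<\lambda^{-\aa})\,d\lambda<\ff$ exactly because $\aa>1$. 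For $Y^{(+)}$ the delicate point is to keep the random $L^\aa$-norm of the local time away from $0$: using $\int_\R\ell_Y(1,x)\,dx=1$ from \eqref{ltdef} and $\mathrm{supp}\,\ell_Y(1,\cdot)\subseteq[-M,M]$ with $M=\sup_{[0,1]}|Y|$, the reverse H\"older inequality gives $\|\ell_Y(1,\cdot)\|_{L^\aa}\ge(2M)^{-(1-1/\aa)}$, hence $S^{-1}\le(2M)^{1-1/\aa}$ and $\E'[S^{-1}]\le 2^{1-1/\aa}(\E'M)^{1-1/\aa}<\ff$ by Jensen and (d). This step for $Y^{(+)}$ is the crux. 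The remaining items are routine: condition (d) for $Z$ follows from maximal inequalities for \sas\ self-similar processes applied conditionally on $\om'$ (whose supremum on $[0,1]$ has all moments of order $<\aa$) together with the bounds on $S$ above; and finiteness of all moments of order $p<\aa$ --- in particular $\E|Z_1|=c_\aa'\,\E'[S]\le c_\aa'(\E'[S^\aa])^{1/\aa}<\ff$, positive by non-degeneracy --- gives (a).
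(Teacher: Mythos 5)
Most of your proposal coincides with the paper's own proof: the reduction of property (b) to integrability of the characteristic function of $Z_1$ (the paper uses the Geman--Horowitz criterion where you use Plancherel, but the computation is the same), the observation that (c) then comes for free by Fourier inversion, and, crucially, the same two inverse-moment estimates: $\E'\big[\|\ell_Y(1,\cdot)\|_{L^\aa(\R)}^{-1}\big]<\ff$ via the support bound and H\"older against hypothesis (d) (this is exactly the paper's \eqref{71}), and $\E'\big[|Y_1|^{-1/\aa}\big]<\ff$ via the bounded density in (c). The only cosmetic differences are that you invoke the sssi property (cited, as in the paper, from \cite{CS} and \cite{jung2010indicator}) to factor $\E e^{i\xi(Z_u-Z_v)}=\phi_{Z_1}(\xi|u-v|^H)$ once and for all, where the paper recomputes each increment characteristic function from the kernels, and that your $L^\aa$ bound on the local-time kernels uses H\"older where the paper interpolates between the $L^1$ and $L^2$ norms.

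The genuine gap is in property (d). Your one-sentence argument rests on a kernel-sup maximal inequality of the form $\P(\sup_{t\in[0,1]}|Z_t|>y)\le C y^{-\aa}\int \sup_t|f_t|^\aa\,dm$, i.e.\ Theorem 10.5.1 of \cite{samorodnitsky1994stable}. That theorem is a strictly non-Gaussian result: it holds for $0<\aa<2$ and is \emph{false} for $\aa=2$. (Counterexample: let $f_{t_n}=e_n$ be a uniformly bounded orthonormal sequence in $L^2$, say normalized sines, indexed by $t_n$ dense in $[0,1]$; then $\int\sup_n|e_n|^2\,dm<\ff$ while the $Z_{t_n}$ are i.i.d.\ standard normals, so $\sup_n|Z_{t_n}|=\ff$ a.s.) There is also no generic ``maximal inequality for \sas\ self-similar processes'' to fall back on: the indicator processes here have $H=H'/\aa<1/\aa$, precisely the regime in which, e.g., linear fractional stable motion is a.s.\ unbounded on compacts, so sample boundedness must come from the specific kernel structure, not from self-similarity. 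Since the theorem's hypotheses include $\aa=2$ --- indeed this is the case the paper cares most about, as the recursion then produces fBm --- property (d) needs a separate Gaussian argument there, which is what the paper supplies: $Y^{(\ast)}$ and $Y^{(\times)}$ are sssi Gaussian, hence fractional Brownian motions, for which (d) is classical; for the mixed processes one works conditionally on $\om'$, using a Slepian-type comparison (Proposition 2.2 of \cite{khoshnevisan1998law}) in one case and the identification of the conditional law as a time-changed two-sided Brownian motion in the other, giving $\E[\sup_{t\in[0,1]}|Y^{(-)}_t|]\le 8\,\E'[\sup_{t\in[0,1]}Y_t]$. For $1<\aa<2$ your route does work, but you should still record the identities $\sup_t\ell_Y(t,x)=\ell_Y(1,x)$ and $\int_\R\sup_t 1_{[0,Y_t]}(x)\,dx\le 2\sup_t|Y_t|$ (the paper's \eqref{ff1} and \eqref{ff2}) that make the kernel-sup integrals finite.
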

\begin{remarks*}
(1)
For the proof, we need that $\ell_Y$ satisfies the occupation time
formula
\[
\int_{0}^{t} 1_A(Y_s)
\,ds = \int_{A} \ell_Y(t,x)\,dx
\]
for any Borel set $A$. This follows from definition (\ref{ltdef}).

(2) The processes $Y^{(+)}_t$ and $Y^{(-)}_t$ are not generally
stable. However, as mentioned above, when they have finite $\alpha$ moments,
one can use the stable central limit theorem and normalize partial
sums of independent copies of these processes to get stable
processes.
\end{remarks*}
\begin{pf*}{Proof of Theorem~\ref{pp}}
\textit{Well-defined $H$-sssi processes with finite $\alpha$th
moments.}\quad
To see that the $Y^\bullet_t$ are well defined and satisfy (a), we
have
%
\begin{equation}
\E\bigl[\bigl(Y^{(-)}_1\bigr)^\alpha\bigr]=\E\bigl[
\bigl(Y^{(\times)}_1\bigr)^\alpha\bigr]=\E'
\int_{\R} \bigl|1_{[0,Y_1]}(x)\bigr|^\alpha \,dx =
\E' |Y_1|,
\end{equation}
which is positive and finite
since $Y_t$ satisfies (a). Also,
%
\begin{equation}
\label{65} \E\bigl[\bigl(Y^{(+)}_1\bigr)^\alpha
\bigr] = \E\bigl[\bigl(Y^{(\ast)}_1\bigr)^\alpha\bigr] =
\E'\int_{\R} \ell_Y(1,x)^\alpha
\,dx.
\end{equation}
To see
that (\ref{65}) is finite and nonzero, note that $\E'\int_{\R}
\ell_Y(1,x)\,dx = 1$ by the occupation time formula and $\E'\int_{\R}
\ell_Y(1,x)^2 \,dx < \infty$ by (b), thus $\ell_Y\in L^\alpha(\Om'\times\R)$
for $\alpha\in(1,2]$.

To see that the $Y^\bullet$ are $H$-sssi, we refer the reader to
Theorem 3.1 in~\cite{CS} and Theorem 2.2 in
\cite{jung2010indicator}.\vspace*{8pt}

\textit{Property} (b).\quad
Next we use Theorem 21.9 of~\cite{geman1980occupation} which implies
condition (b) under the assumption that
%
\begin{equation}
\label{charep} \int_{\reals}\int_0^T
\int_0^T \E\bigl[e^{i\th(Y^\bullet_t-Y^\bullet_s)}\bigr]\,ds \,dt \,d
\th<\infty.
\end{equation}

Let us show (b) for $Y^{(\ast)}_t$. We have
%
\begin{equation}
\label{rm} \E\bigl[e^{i\th(Y^{(\ast)}_t-Y^{(\ast)}_s)}\bigr] = \exp
\biggl(-\th^\alpha
\E'\int_{\R}\bigl|\ell_Y(t,x)-
\ell_Y(s,x)\bigr|^\alpha \,dx \biggr).
\end{equation}
Using
$\ell_Y(t,x-B_s)-\ell_Y(s,x-B_s) \ed\ell_Y(t-s,x)$ and
%
\begin{equation}
\label{ltscale} \ell_Y\bigl(ct,c^{H'}x\bigr) \stackrel{d}
{=} c^{1-H'}\ell_Y(t,x)
\end{equation}
we see that
%
\begin{eqnarray}
\label{scale} \int_{\reals}\bigl|\ell_Y(t,x)-
\ell_Y(s,x)\bigr|^\alpha \,dx &\ed& \int_{\reals}
\ell_Y(t-s,x)^\alpha \,dx
\nonumber\\[-8pt]\\[-8pt]
&\ed& \int_{\reals}|t-s|^{\alpha(1-H'+H'/\alpha)}\ell_Y(1,u)^\alpha
\,du.\nonumber
\end{eqnarray}
Substituting $v=\theta\cdot
|t-s|^{1-H'+H'/\alpha} (\E'\int_{\reals}\ell_Y(1,u)^\alpha
\,du )^{1/\alpha}$
we get that (\ref{charep}) equals
%
\begin{equation}\quad
\biggl(\E'\int_{\reals}\ell_Y(1,u)^\alpha
\,du \biggr)^{-1/\alpha}\int_{\reals}e^{-v^\alpha}\int
_0^T\int_0^T
|t-s|^{-1+H'-H'/\alpha} \,ds \,dt \,dv,
\end{equation}
which is finite since
$\E'\int_{\reals}\ell_Y(1,u)^\alpha \,du>0$ by the occupation time
formula.

To show (b) for $Y^{(+)}_t$, write
%
\begin{eqnarray}
\label{rm2}\qquad \E\bigl[e^{i\th(Y^{(+)}_t-Y^{(+)}_s)}\bigr] & = & \int_{\Om'}
\int_{\Om} \exp\biggl({i\th\int_{\R}
\bigl(\ell_Y(t,x)-\ell_Y(s,x)\bigr) M_0(dx)}
\biggr) \,d\omega \,d\omega'
\nonumber\\[-8pt]\\[-8pt]
& = &\E' \exp\biggl({-\th^\alpha\int_{\R}\bigl|
\ell_Y(t,x)-\ell_Y(s,x)\bigr|^\alpha \,dx} \biggr).\nonumber
\end{eqnarray}
Using (\ref{scale}) and (\ref{rm2}), we have that, in this case,
(\ref{charep}) is
\[
\int_0^T\int_0^T
\E' \biggl[\int_{\reals} \exp\biggl({-
\th^\alpha\int_{\reals}|t-s|^{\alpha(1-H'+H'/\alpha)}
\ell_Y(1,u)^\alpha \,du} \biggr)\,d\theta\biggr] \,ds \,dt.
\]

Substituting $v=\theta\cdot
|t-s|^{1-H'+H'/\alpha} (\int_{\reals}\ell_Y(1,u)^\alpha
\,du )^{1/\alpha}$ and
integrating we obtain, for some constant $c>0$,
%
\begin{equation}
\label{rer} c \E' \biggl[ \biggl(\int_{\reals}
\ell_Y(1,u)^\alpha \,du \biggr)^{-1/\alpha} \biggr]\int
_0^T\int_0^T
\frac
{ds\,dt}{|t-s|^{1-H'+H'/\alpha}}.
\end{equation}
To show that this is finite we need only show that $\E'
[(\int_{\reals}\ell_Y(1,u)^\alpha \,du)^{-1/\alpha} ]<
\infty$. We have
$\ell_Y(1,x)(\omega') = 0$ for
%
\begin{equation}
|x|> A\bigl(\omega'\bigr):= \sup_{t\in[0,1]} \bigl|Y_t
\bigl(\omega'\bigr)\bigr|,
\end{equation}
so by Hold\"{e}r's inequality,
%
\begin{eqnarray}
\label{71}\qquad \int_{-\infty}^{\infty} \ell_Y(1,x)
\bigl(\omega'\bigr) \,dx &=& \int_{-A(\omega')}^{A(\omega')}
\ell_Y(1,x) \bigl(\omega'\bigr) \,dx
\nonumber\\[-8pt]\\[-8pt]
&\leq&\bigl(2A\bigl(\omega'\bigr)\bigr)^{(\alpha-1)/\alpha}
\biggl(\int_{-A(\omega
')}^{A(\omega')} \bigl(\ell_Y(1,x)
\bigl(\omega'\bigr) \bigr)^\alpha \,dx \biggr)^{1/\alpha}.
\nonumber
\end{eqnarray}
By the occupation
time formula, the left-hand side of (\ref{71}) equals $1$ a.s. so that
%
\begin{equation}
\E' \biggl(\int_\R
\ell_Y(1,x)^\alpha \,dx \biggr)^{-1/\alpha} \leq
\E' (2A)^{(\alpha-1)/\alpha}.
\end{equation}
Property (d) of $Y_t$ completes the proof of
(b) for $Y^{(+)}$.

Moving on to $Y^{(\times)}_t$, we have
%
\begin{eqnarray}
\E\bigl[e^{i\th(Y^{(\times)}_t-Y^{(\times)}_s)}\bigr] &=& \exp\biggl
(-\th^\alpha
\E'\int_{\R}1_{[Y_s,Y_t]} \,dx \biggr)
\nonumber\\[-8pt]\\[-8pt]
&=&\exp\bigl(-\th^\alpha\E'|Y_{t-s}| \bigr)=\exp
\bigl(-\th^\alpha|t-s|^{H'}\E'|Y_{1}|
\bigr).\nonumber
\end{eqnarray}
Thus (\ref{charep}) reduces to
%
\begin{eqnarray}
&&\int_{\R} \int_{0}^{T}
\int_{0}^{T} \exp\bigl(-\th^\alpha
|t-s|^{H'}\E'|Y_{1}| \bigr) \,dt\,ds\,d\th\nonumber\\[-8pt]\\[-8pt]
&&\qquad= C\int
_{0}^{T}\int_{0}^{T}
|t-s|^{-H'/\alpha} \,dt\,ds < \infty,\nonumber
\end{eqnarray}
where $C = \int_{\reals} \exp(-u^\alpha\E'|Y_{1}|) \,du$.

Finally, let us consider $Y^{(-)}$. We may mimic steps (\ref{rm})
through (\ref{rer}) in order to reduce (\ref{charep}) to showing
%
\begin{equation}
\E' \biggl[ \biggl(\int_{\reals}1_{[0,Y_1]}(x)
\,dx \biggr)^{-1/{\alpha
}} \biggr]= \E'\bigl[|Y_1|^{-1/\alpha}
\bigr] < \infty.
\end{equation}
But this follows from assumption (c)
on $Y_t$, since we may simply integrate $|x|^{-1/\alpha}$ against the
bounded continuous density of $Y_1$ which will give a finite value.
This establishes (b) for $Y^{(-)}$.\vspace*{8pt}

\textit{Property} (c).\quad
In the course of showing property (b) for $Y^\bullet_t$, we showed
that in all cases $Y_t^\bullet$ possesses a nonnegative and
integrable characteristic function, and thus (c) follows from
Theorem 3.3.5 in~\cite{durrett2010probability}.\vspace*{8pt}

\textit{Property} (d).\quad
Consider first $\alpha=2$. Property (d) is known for $Y_t^{(\ast)}$ and
$Y_t^{(\times)}$ since they are sssi Gaussian processes, that is,
fractional Brownian motions.

For $Y_t^{(+)}$, let $\tilde{B}_t$ be a two-sided Brownian motion.
We use Proposition 2.2 in~\cite{khoshnevisan1998law} which is
essentially a corollary of Slepian's lemma. It implies that for each
fixed $\omega'$,
%
\begin{equation}\quad
\P\biggl(\sup_{t\in[0,1]} \int_{\R}
1_{[0,Y_t(\omega')]}(s) \,d\tilde{B}_s > y \biggr) \leq2\P\biggl(\int
_{\R} 1_{[0,Y_1(\omega')]}(s) \,d\tilde{B}_s > y
\biggr).
\end{equation}
Integrating over $\Om'$, property (d) for $Y^{(+)}_t$ follows from
property (d) for $Y_t$.

For $Y_t^{(-)}$, let $Y^*:=\sup_{t \in[0,1]} Y_t$, and
$Y_*:=\inf_{t \in[0,1]} Y_t$. We have
%
\begin{eqnarray}
\E\Bigl[\sup_{t \in[0,1]} \bigl|Y_t^{(-)}\bigr|\Bigr] &\le& \E
\Bigl[\sup_{t \in[0,1]} Y_t^{(-)}+\sup_{t \in[0,1]}
\bigl(-Y_t^{(-)}\bigr)\Bigr]
\nonumber
\\
&\le& 2\E'\biggl[\sup_{t \in[0,1]} \int_{\R}
1_{[0,Y_t]}(s) \,d\tilde{B}_s\biggr]
\nonumber\\[-8pt]\\[-8pt]
&\le& 2\E'\biggl[\sup_{T \in[Y_*,Y^*]} \int_{\R}
1_{[0,T]}(s) \,d\tilde{B}_s\biggr]
\nonumber
\\
&\le& 8\E'\bigl(Y^*\bigr).\nonumber
\end{eqnarray}
The last inequality follows since the integral
in the second to last line is just a two-sided Brownian motion at
time $T$ and $\E'(Y^*)=\E'(-Y_*)<\infty$. We thus get property (d) for
$Y^{(-)}_t$ since property (d) holds for $Y_t$.

Let us now suppose that $1<\alpha<2$. Theorem 10.5.1 of
\cite{samorodnitsky1994stable} states that if
%
\begin{equation}
Y_t = \int_{E} f_t(x)
M(dx)
\end{equation}
for some family of $L^\alpha(E,m)$ functions $\{f_t(x)\}_{t\ge0}$,
where $m$ is the control measure of $M$, then there is a constant
$C$ such that
%
\begin{equation}
\P\Bigl(\sup_{t\in[0,1]} |Y_t| > y\Bigr) \leq
\frac{C}{y^\alpha} \int_{E} \sup_{t\in[0,1]}
\bigl|f_t(x)\bigr|^\alpha m(dx)
\end{equation}
for any $y >0$.

We can therefore obtain (d) for $Y_t^\bullet$ by showing that
%
\begin{equation}
\label{ff1} \E'\int_{\R} \sup_{t\in[0,1]}
\bigl(\ell_Y(t,x)^\alpha\bigr) \,dx = \E'\int
_{\R} \ell_Y(1,x)^\alpha \,dx
\end{equation}
and
%
\begin{equation}
\label{ff2} \E'\int_{\R} \sup_{t\in[0,1]}
\bigl(1_{[0,Y_t]}(x)^\alpha\bigr) \,dx = 2\E' \Bigl(
\sup_{t\in[0,1]} Y_t \Bigr)
\end{equation}
are both finite. As seen in (\ref{65}) and the argument thereafter,
(\ref{ff1}) is finite since $Y_t$ satisfies (b). Also (\ref{ff2}) is
finite since $Y_t$ satisfies (d).
\end{pf*}

For fixed $\alpha\in(1,2]$, define
%
\begin{equation}
\label{defphi} \phi_+(x):= 1-x+x/\alpha\quad\mbox{and}\quad \phi_-(x):=x/\alpha.
\end{equation}
Applying Theorem
\ref{pp} recursively, we have the following corollary:
%
\begin{corollary} \label{t3}
If $Y^\varnothing_t$ is an $H'$-sssi process satisfying \textup{(a)--(d)} of
Theorem~\ref{pp}, then $Y_t^{(v_1,\ldots,v_{n})}$ and
$Y_t^{(w_1,\ldots,w_{n})}$ [as defined in (\ref{yp})--
(\ref{ym2})] are $H$-sssi processes with
\[
H=\phi_{v_n}\circ\cdots\circ\phi_{v_1}\bigl(H'
\bigr).
\]
Moreover, $Y_t^{(w_1,\ldots,w_{n})}$ is an {S$\al$S} process.
\end{corollary}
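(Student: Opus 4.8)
The plan is to prove Corollary \ref{t3} by induction on the length $n$ of the index vector, using Theorem \ref{pp} as the sole inductive step. The feature of Theorem \ref{pp} that makes this work is precisely that it takes as input a process satisfying (a)--(d) and returns processes that \emph{again} satisfy (a)--(d); the hypotheses required to apply the theorem are exactly the conclusions it produces, so the construction can be iterated without ever leaving the class of admissible inputs. For the base case $n=0$ the index is the empty vector, the empty composition of the maps $\phi_\pm$ is the identity, and $Y^\emptyset_t$ is $H'$-sssi satisfying (a)--(d) by hypothesis, so $H=H'$ and there is nothing to check.

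For the inductive step I would assume that $Y^{\hat v}_t$ (resp.\ $Y^{\hat w}_t$) is $H''$-sssi and satisfies (a)--(d), where $H''=\phi_{v_{n-1}}\circ\cdots\circ\phi_{v_1}(H')$, and then apply Theorem \ref{pp} with $Y^{\hat v}_t$ playing the role of the theorem's $Y_t$, so that the theorem's parameter ``$H'$'' is our $H''$. Comparing the recursive definitions \eqref{yp}--\eqref{ym} and \eqref{yp2}--\eqref{ym2} with \eqref{minus}--\eqref{times}, the process $Y^v_t$ is literally $Y^{(+)}$ when $v_n=(+)$ and $Y^{(-)}$ when $v_n=(-)$, and similarly $Y^w_t$ equals $Y^{(\ast)}$ or $Y^{(\times)}$. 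Reading the Hurst exponents off the theorem gives $1-H''+H''/\aa=\phi_+(H'')$ in the $(+)/(\ast)$ cases and $H''/\aa=\phi_-(H'')$ in the $(-)/(\times)$ cases; since by \eqref{def:phi} these are exactly $\phi_{v_n}(H'')$, we obtain $H=\phi_{v_n}\circ\cdots\circ\phi_{v_1}(H')$, with the convention that $\ast$ and $\times$ play the roles of $+$ and $-$ for the $w$-family. Because the theorem simultaneously guarantees (a)--(d) for the output, the induction closes and the composition formula follows.

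For the final assertion, that each $Y^{(w_1,\ldots,w_n)}_t$ is \sas, I would observe that at every stage of the $w$-recursion the integral is taken against the \sas\ random measure $M_1$, whose control measure $\P'\times{}$Lebesgue integrates out the dependence on $\om'$. The proof of Theorem \ref{pp} already shows the relevant kernels $\ell_{Y^{\hat w}}(t,\cdot)(\om')$ and $1_{[0,Y^{\hat w}_t(\om')]}$ lie in $L^\aa(\Om'\times\R)$, so \eqref{def:int} applies: any finite linear combination $\sum_j\theta_j Y^{(w)}_{t_j}$ is again an integral of an $L^\aa$ kernel against $M_1$, hence \sas. Thus the finite dimensional distributions are \sas, and a routine induction propagates the property through the recursion. (By contrast the $v$-family integrates against $M_0$ for a frozen realization of $Y^{\hat v}$, yielding only conditionally \sas, i.e.\ mixed, laws, which is why no such claim is made there.)

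As for difficulty, the main point I would stress is that this corollary is essentially bookkeeping once Theorem \ref{pp} is available: there is no genuine analytic obstacle, since all the delicate work---existence of the local times and their membership in $L^2(\Om'\times\R)$, the $\aa$-moment and continuous-density conditions, and property (d) via the maximal inequalities of \cite{khoshnevisan1998law} and \cite{samorodnitsky1994stable}---is already discharged in the proof of Theorem \ref{pp}. The only mild care needed is to keep the symbol correspondence straight, matching $(+)\leftrightarrow\phi_+$, $(-)\leftrightarrow\phi_-$ and $\ast\leftrightarrow+$, $\times\leftrightarrow-$, and applying the maps in the correct order so the composition in the exponent reads off consistently with the recursion $\hat v\mapsto v$.
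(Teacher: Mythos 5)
Your proposal is correct and takes essentially the same route as the paper, which simply states that the corollary follows by applying Theorem \ref{pp} recursively: your induction on $n$, using the fact that the theorem's outputs satisfy the same hypotheses (a)--(d) as its input, together with reading off $\phi_{\pm}$ from the exponent formulas and noting that the $w$-family is \sas\ because it is an integral of an $L^\aa(\Om'\times\R)$ kernel against the \sas\ measure $M_1$, is exactly the intended argument.
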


\section{Brownian motion extracted from fBm, $H<1/2$}\label{sectime-changes}
Suppose $\alpha=2$. Then the family of stochastic integrals,
$(Y^{(\times)}_{t})_{t\ge0}$, is an $H'$-sssi Gaussian process,
thus it is precisely fBm with Hurst exponent $H' < 1/2$. In this
section, we show that Brownian motion can be extracted from
$Y^{(\times)}_t$ by time-changing its integral kernels. In order to
motivate our time-changed kernels, we first show that Brownian
motion can also be extracted from a stable process at random time,
$Y^{(-)}_t$, using a time-change.

To keep things simple, we assume in this section that the random
time process $Y_t$ is itself an fBm. Thus it is a.s. continuous and
satisfies the property that for each $s>0$,
%
\begin{equation}
\label{finitetau} \tau_s = \inf_{t \geq0}
\{t\dvtx Y_t=s\}<\infty\qquad\mbox{a.s.}
\end{equation}

Heuristically, time-changing the kernel of $Y^{\bullet}_t$ undoes
the subordination of $Y^{\bullet}_t$ to the process $Y_t$, leaving
us with a process $(M(A_t))_{t\ge0}$. We then observe that
$A_s\subset A_t$ for $s<t$, and that $m(A_t)$ is linearly increasing
(here $m$ is the control measure). One need only check that such a
procedure gives us what we want, by looking at the finite-dimensional distributions. Since our interest is in the case
$\alpha=2$, we have that $M_0$, $M_1$ are Gaussian random measures on
$\reals$ and $\Om' \times\reals$, respectively, and we in fact need
only check covariances.

Let us start by presenting the time-change of $Y^{(-)}_t$.
%
\begin{prop} \label{lastprop}
Let the random time process $Y_t$ be a fractional Brownian motion.
If $Y^{(-)}_t$ is defined as in (\ref{minus}) with $\alpha=2$, then
$Y^{(-)}_{\tau_t}$ is a Brownian motion.
\end{prop}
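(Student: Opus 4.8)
The plan is to exploit the fact that when $\aa=2$ the process $Y^{(-)}_t$ is literally a two-sided Brownian motion subordinated to $Y_t$, so that the time-change $\tau_t$ inverts the subordination \emph{pathwise}. The whole argument then collapses to the elementary identity $Y_{\tau_t}=t$, and no Gaussian computation is really needed.

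First I would record the representation $Y^{(-)}_t=\tilde B(Y_t)$. Since $\aa=2$ and $M_0$ has Lebesgue control measure, $M_0$ assigns to each interval an independent centered Gaussian value whose variance is proportional to the length of the interval, so by the additivity and independence in Definition~\ref{def:random measure} the map $a\mapsto \int_\R 1_{[0,a]}(x)\,M_0(dx)$ is exactly a two-sided Brownian motion $\tilde B$ of the form \eqref{two-sided} (up to the usual constant variance normalization). The convention $[0,a]:=[a,0]$ for $a<0$ is precisely what makes the negative side equal to $B^{(2)}(-a)$ rather than its reflection, so no spurious sign appears. Because $M_0$ is independent of $Y$, this $\tilde B$ is independent of $Y$; in particular its restriction to $[0,\ff)$ is a Brownian motion. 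Substituting $Y_t$ gives $Y^{(-)}_t=\int_\R 1_{[0,Y_t]}(x)\,M_0(dx)=\tilde B(Y_t)$.

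Next I would use the continuity of the random time process. Since $Y$ is an fBm it has a.s. continuous paths with $Y_0=0$, and by \eqref{finite_tau} the hitting time $\tau_t=\inf\{s\ge 0: Y_s=t\}$ is a.s. finite for every $t\ge 0$. Continuity forces the infimum to be attained, so $Y_{\tau_t}=t$ for all $t\ge0$ almost surely. Composing with the representation above yields the pathwise identity
\be
Y^{(-)}_{\tau_t}=\tilde B\big(Y_{\tau_t}\big)=\tilde B(t),\qquad t\ge 0,
\ee
and since $\tilde B$ restricted to $[0,\ff)$ is a Brownian motion, so is $t\mapsto Y^{(-)}_{\tau_t}$.

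I expect the only genuinely delicate point to be the justification of the representation $Y^{(-)}_t=\tilde B(Y_t)$ --- in particular verifying that the indicator-sign convention produces a bona fide two-sided Brownian motion and not a reflected variant --- together with the measurability bookkeeping of evaluating the Gaussian-random-measure integral at the random continuous argument $Y_{\tau_t}$. Once $Y_{\tau_t}=t$ is in hand these are routine. As an alternative consistent with the covariance heuristic preceding the statement, one could instead check directly that $t\mapsto Y^{(-)}_{\tau_t}$ is a centered Gaussian process with covariance $\E[Y^{(-)}_{\tau_s}Y^{(-)}_{\tau_t}]\propto\min(s,t)$ and continuous paths; but this reduces to the same identity $Y_{\tau_t}=t$, so the pathwise route is preferable.
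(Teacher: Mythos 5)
Your proposal is correct and follows essentially the same route as the paper: both arguments rest on identifying $a\mapsto\int_\R 1_{[0,a]}(x)\,M_0(dx)$ with a two-sided Brownian motion $\tilde B$ and substituting the pathwise identity $Y_{\tau_t}=t$ into the kernel to obtain $Y^{(-)}_{\tau_t}=\tilde B_t$. The covariance check you mention as an alternative is exactly the paper's second (and only other) step, so the two proofs coincide in substance; your explicit justification of $Y_{\tau_t}=t$ via path continuity is a point the paper leaves implicit.
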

\begin{pf}
We have
%
\begin{eqnarray}
Y^{(-)}_{\tau_{t}} = \int_{\reals}
1_{[0,
Y_{\tau_{t}}]}(x) M_0(dx) = \int_{\reals}
1_{[0,t]}(s)\,d\tilde{B}_s = \tilde B_{t},
\end{eqnarray}
where $\tilde{B}_t$ is a two-sided Brownian
motion. For the covariances, if $s<t$, we have
%
\begin{eqnarray}
\E\bigl(Y^{(-)}_{\tau_{s}}Y^{(-)}_{\tau_{t}}
\bigr)
&=& \E\biggl(\int_{\reals} 1_{[0,s]}(r)\,d
\tilde{B}_r\cdot\int_{\reals} 1_{[0,t]}(r)\,d
\tilde{B}_r \biggr)
\nonumber\\[-8pt]\\[-8pt]
&=& \int_{\reals} \bigl(1_{[0,s]}(r)
\bigr)^2 \,dr = s.\nonumber
\end{eqnarray}
\upqed\end{pf}

In the case of
\[
Y^{(\times)}_t = \int_{\Om'\times\R}
1_{[0,Y_t(\omega')]}(x) M_1\bigl(d\omega'\times dx\bigr),
\]
we cannot look at ``$Y^{(\times)}_{\tau_t}$'' since $\tau_t$ lives
on the same probability space as~$M_1$. We address this issue by\vadjust{\goodbreak}
instead time-changing the kernel $1_{[0,Y_t]}$. Let us define
%
\begin{equation}
\label{lasteq} Y^{(\times)_\tau}_t := \int_{\Om'\times\R}
1_{[0,Y_{\tau_t}(\omega')]}(x) M_1\bigl(d\omega'\times dx\bigr).
\end{equation}
A good way to think about the above integral is in terms of
a central limit theorem similar to (\ref{resultCLT}):
%
\begin{equation}\label{resultCLT2}
n^{-1/2}\sum_{i=1}^n
\Delta_H\bigl(\tau_t^{(i)}\bigr)^{(i)}
\stackrel{\mathrm{f.d.d.}}{\lar} \int_{\Om'\times\R} 1_{[0,Y_{\tau_t}(\omega')]}(x) M_1\bigl(d
\omega'\times dx\bigr).
\end{equation}
Here, $\tau^{(i)}$ is measurable with respect to the $\sigma$-field of
$\Delta_H^{(i)}$. By Proposition~\ref{lastprop}, the
$\Delta_H(\tau_t^{(i)})^{(i)}$ are independent Brownian motions.
The next proposition shows that the right-hand side is also a Brownian
motion thus proving~(\ref{resultCLT2}).

\begin{prop}
Let the random time process $Y_t$ be a fractional Brownian motion.
If $Y^{(\times)}_t$ is defined as in (\ref{times}) with $\alpha=2$, then
$Y^{(\times)_\tau}_{t}$ is a Brownian motion.
\end{prop}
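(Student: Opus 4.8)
The plan is to reduce this to a covariance computation, since $Y^{(\times)_\tau}_t$ is a centered Gaussian process (being a Gaussian stochastic integral against $M_1$), and a centered Gaussian process with covariance $\E\bigl(Y^{(\times)_\tau}_s Y^{(\times)_\tau}_t\bigr)=s\wedge t$ is necessarily a standard Brownian motion. So it suffices to show that for $s<t$ the covariance equals $s$.

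First I would write down the covariance using the defining property of the stable (here $\aa=2$, Gaussian) random measure $M_1$, whose control measure is $\P'\times$Lebesgue. For a Gaussian random measure, the stochastic integrals $\int f\,M_1$ and $\int g\,M_1$ have covariance $\int fg\,d(\P'\times\text{Leb})$. Applying this to the kernels $f=1_{[0,Y_{\tau_s}(\om')]}$ and $g=1_{[0,Y_{\tau_t}(\om')]}$ gives
\be
\E\bigl(Y^{(\times)_\tau}_{s}Y^{(\times)_\tau}_{t}\bigr)
= \E'\int_{\R} 1_{[0,Y_{\tau_s}(\om')]}(x)\,1_{[0,Y_{\tau_t}(\om')]}(x)\,dx.
\ee

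The key step is then to evaluate the inner integral for each fixed $\om'$. By the definition \eqref{finite_tau} of $\tau_s$ as the first hitting time of level $s$, we have $Y_{\tau_s}(\om')=s$ and $Y_{\tau_t}(\om')=t$ for (almost) every $\om'$, using the a.s. continuity of fBm which guarantees these hitting times are finite and that the level is actually attained. Since $0<s<t$, the two indicator intervals are $[0,s]$ and $[0,t]$, whose product integrates to $\int_\R 1_{[0,s]}(x)1_{[0,t]}(x)\,dx = \int_\R 1_{[0,s]}(x)\,dx = s$, exactly as in the proof of Proposition \ref{lastprop}. Since this value $s$ is deterministic (it does not depend on $\om'$), taking $\E'$ leaves it unchanged, yielding covariance $s=s\wedge t$. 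I would note this mirrors the computation already carried out for $Y^{(-)}_{\tau_t}$ in Proposition \ref{lastprop}; the only new ingredient is that the time-change is applied inside the kernel rather than to the outer process, which is precisely why one works with $Y^{(\times)_\tau}$ rather than the ill-defined ``$Y^{(\times)}_{\tau_t}$''.

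The main obstacle, and the point requiring the most care, is the justification that $Y_{\tau_s}(\om')=s$ holds almost surely and, slightly more subtly, that the collapse of the kernel to the deterministic indicator $1_{[0,s]}$ is legitimate simultaneously for the whole continuum of levels appearing in the finite dimensional distributions. Because $Y_t$ is an fBm, it is a.s. continuous and (for the relevant range of Hurst parameters) the hitting time $\tau_s$ is finite and the value $s$ is attained, so $1_{[0,Y_{\tau_s}(\om')]}=1_{[0,s]}$ for a.e. $\om'$; this is where assumption \eqref{finite_tau} does its work. Once this pointwise identity is in hand the Gaussianity and the covariance computation finish the argument immediately, so I do not expect any analytic difficulty beyond confirming the hitting-time behavior.
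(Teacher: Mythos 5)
Your proposal is correct and takes essentially the same approach as the paper: both rest on the observation that $Y_{\tau_t}(\om')=t$ a.s.\ (finiteness of the hitting time plus continuity of fBm), which collapses the kernel $1_{[0,Y_{\tau_t}(\om')]}$ to the deterministic indicator $1_{[0,t]}$, after which everything is a Gaussian covariance computation with control measure $\P'\times$Lebesgue. The only cosmetic difference is that the paper verifies the covariance by expanding $\E\(Y^{(\times)_\tau}_{s}+Y^{(\times)_\tau}_{t}\)^2 = 3s+t$, whereas you invoke the $L^2$ isometry to compute $\E\(Y^{(\times)_\tau}_{s}Y^{(\times)_\tau}_{t}\)=s$ directly; these are equivalent.
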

\begin{pf}
We have
%
\begin{eqnarray}
Y^{(\times)_\tau}_{t} &=& \int_{\Om' \times\reals}
1_{[0,
Y_{\tau_t}]}(x) M_1\bigl(d\omega' \times dx\bigr)
\nonumber\\[-8pt]\\[-8pt]
&=& \int_{\Om\times\reals} 1_{[0,t]}(x) M_1\bigl(d
\omega' \times dx\bigr) = M_1\bigl(\Om'
\times[0,t]\bigr),\nonumber
\end{eqnarray}
which is a Gaussian random variable with
variance $\P'\times\mathrm{Leb}(\Om' \times[0,t]) = t$. For the
covariances we analyze second moments. If $s<t$, we have
%
\begin{eqnarray}
\E\bigl(Y^{(\times)_\tau}_{s}+Y^{(\times)_\tau
}_{t}
\bigr)^2
&=& \E\biggl(\int_{\Om'\times\reals} (2
\cdot1_{[0,s]}+1_{[s,t]} ) M_1\bigl(d
\omega' \times dx\bigr) \biggr)^2
\nonumber\\
&=& \int_{\Om'\times\reals} (2\cdot1_{[0,s]}+1_{[s,t]}
)^2 \P'\times\mathrm{Leb}\bigl(d\omega'
\times dx\bigr)
\nonumber\\[-8pt]\\[-8pt]
&=& \int_{\reals} (4\cdot1_{[0,s]}+1_{[s,t]}
) \,dx
\nonumber\\
&=& 3s+t\nonumber
\end{eqnarray}
as required.
\end{pf}

\section*{Acknowledgments}

We thank Clement Dombry, Harry Kesten and Gennady Samorodnitsky for
helpful email correspondence. We also thank anonymous referees for
careful readings, nice suggestions and corrections.

Part of this work was done when P. Jung was visiting Pohang
University of Science and Technology and also IPAM at UCLA. He
thanks both institutions for their hospitality. G. Markowsky thanks
Sogang University for hospitality during which some of this work was
done.\vadjust{\goodbreak}



\printaddresses

\end{document}